\numberwithin{equation}{section}
\newtheorem{lemma}[equation]{Lemma}
\newtheorem{thm}[equation]{Theorem}
\newtheorem{conjecture}[equation]{Conjecture}
\newtheorem{cor}[equation]{Corollary}
\newtheorem{prop}[equation]{Proposition}
\newtheorem{fact}[equation]{Fact}
\newtheorem{example}[equation]{Example}
\newtheorem{defi}[equation]{Definition}
\theoremstyle{remark}
\newtheorem{remark}[equation]{Remark}
\newtheorem*{notation}{Notation}
\renewcommand{\bar}[1]{#1\llap{$\overline{\phantom{\rm#1}}$}}
\newcommand{\lra}{\longrightarrow}
\DeclareMathOperator{\cJ}{\mathcal{J}}
\DeclareMathOperator{\sep}{{sep}}
\DeclareMathOperator{\tor}{{tor}}
\DeclareMathOperator{\End}{{End}}
\DeclareMathOperator{\hhat}{{\widehat{h}}}
\DeclareMathOperator{\GL}{{GL}}
\DeclareMathOperator{\dirac}{\overline{\delta}}
\DeclareMathOperator{\goes}{\stackrel{\emph{w}}{\rightarrow}}
\newcommand{\N}{{\mathbb N}}
\newcommand{\T}{{\mathbb{T}}}
\newcommand{\Q}{{\mathbb Q}}
\newcommand{\R}{{\mathbb R}}
\newcommand{\C}{{\mathbb C}}
\newcommand{\E}{{\mathbb E}}
\newcommand{\Spec}{{Spec}}
\newcommand{\Fp}{{\mathbb{F}_p}}
\newcommand{\Fq}{{\mathbb{F}_q}}
\newcommand{\Fpbar}{{\bar{\Fp}}}
\newcommand{\Kbar}{{\bar{K}}}
\newcommand{\Qbar}{\bar{\mathbb{Q}}}
\newcommand{\bG}{{\mathbb G}}
\newcommand{\Gal}{{\rm Gal}}
\newcommand{\OO}{{\mathcal O}}
\newcommand{\isomto}{\overset{\sim}{\rightarrow}}
\newcommand{\tensor}{\otimes}
\renewcommand{\Kbar}{K^{\sep}}
\newcommand{\e}{{\rm E_v}}
\newcommand{\fo}{\mathfrak o}
\newcommand{\bP}{{\mathbb P}}
\begin{document}



\title{Integral points for Drinfeld modules}

\author{Dragos Ghioca}
\address{
Dragos Ghioca\\
Department of Mathematics\\
University of British Columbia\\
Vancouver, BC V6T 1Z2\\
Canada
}
\email{dghioca@math.ubc.ca}


\begin{abstract}
We prove that in the backward orbit of a non-preperiodic point under the action of a Drinfeld module of generic characteristic there exist at most finitely many points $S$-integral with respect to another nonpreperiodic  point. This provides the answer (in positive characteristic) to a question raised by Sookdeo in \cite{Vijay}. We also prove that for each nontorsion point $z$ there exist at most finitely many torsion points which are $S$-integral with respect to $z$. This proves a question raised by Tucker and the author in \cite{newdrin}, and it gives the analogue of Ih's conjecture \cite{Ih} for Drinfeld modules.
\end{abstract}

\thanks{2010 AMS Subject Classification: Primary 11G50; Secondary 11G25, 11G10. Our research was partially supported by NSERC}


\maketitle


\section{Introduction}
\label{intro}

Let $k$ be a number field, let $S$ be a finite set of places of $k$ containing all its archimedean places, and let $\fo_{k,S}$ be the subring of $S$-integers contained in $k$. The following two conjectures were made by Ih (and refined by Silverman and Zhang) as an analogue of the classical diophantine problems of Mordell-Lang, Manin-Mumford, and Lang; for more details, see \cite{Ih}.

\begin{conjecture}
\label{Ih conjecture 1}
Let $A$ be an abelian variety defined over $k$, and let $A_S/\Spec(\fo_{k,S})$ be a model of $A$. Let $D$ be an
effective divisor on $A$, defined over $k$, at least one of whose irreducible components is not
the translate of an abelian subvariety by a torsion point, and let $D_S$ be its Zariski closure in
$A_S$. Then the set  of all torsion points of $A(\bar{k})$ whose closure in $A_S$
is disjoint from $D_S$, is not Zariski dense in $A$.
\end{conjecture}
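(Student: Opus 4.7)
I would proceed by contradiction: suppose there is a Zariski-dense sequence of torsion points $\{t_n\}\subset A(\bar k)_{\tor}$ whose closures in $A_S$ are disjoint from $D_S$. After enlarging $k$ and $S$ and replacing $D$ by a suitable irreducible component, one may assume $D$ is irreducible and not a torsion translate of an abelian subvariety. Equip $\cO(D)$ with a cubical adelic metrization, producing local N\'eron functions $\lambda_{D,v}$ at each place $v$ and a canonical height $\hhat_D$. Since $\hhat_D(t_n)=0$ on torsion, the product formula yields $\sum_v\lambda_{D,v}(t_n)=O(1)$ averaged over Galois conjugates; the $D_S$-integrality hypothesis moreover forces $\lambda_{D,v}(t_n^\sigma)=0$ for every $v\notin S$ and every Galois conjugate $\sigma$, so the entire contribution collapses onto the finite set $S$.

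Next I would invoke the Szpiro--Ullmo--Zhang equidistribution theorem: as $[k(t_n):k]\to\infty$, the Galois orbits of the $t_n$ equidistribute to the canonical measure $\mu_v$ on $A_v^{\mathrm{an}}$ at each place $v$ (Haar measure at archimedean $v$, Bogomolov measure at non-archimedean $v$). Averaging the local N\'eron functions over Galois orbits and passing to the limit converts the identity above into
\[
\sum_{v\in S}\int_{A_v^{\mathrm{an}}} \lambda_{D,v}\, d\mu_v \;=\; O(1).
\]
The heart of the argument is to show the left-hand side is strictly positive whenever $D$ is not a torsion translate of an abelian subvariety: at archimedean $v$ the function $\lambda_{D,v}$ is essentially $-\log$ of a theta section, and its Haar integral computes an Arakelov self-intersection which is strictly positive unless $D$ has constant Green's function (i.e., a subtorus translate); at non-archimedean $v\in S$ an analogous potential-theoretic computation on the Berkovich analytification, via Raynaud uniformization, should supply the same positivity. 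Summing over $v\in S$ and multiplying by $n$ then contradicts the $O(1)$ bound.

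The principal obstacle is making the ``strictly positive unless a subtorus translate'' step rigorous and uniform in $D$: this is a rigidity statement that one expects to reduce to Ullmo--Zhang's resolution of the Bogomolov conjecture, i.e.\ to the characterization of divisors of zero essential minimum. A secondary technical nuisance is the logarithmic singularity of $\lambda_{D,v}$ along $D$; since the $t_n^\sigma$ lie away from $D$ by hypothesis, the singularity is integrable in the limit, but legitimizing the interchange of limit and integral requires an effective lower bound on the $v$-adic distance from $t_n^\sigma$ to $D$, which one extracts from the $D_S$-integrality hypothesis at places outside $S$ combined with a pigeonhole argument at the finitely many $v\in S$. Once these analytic points are secured, the contradiction with the $O(1)$ bound closes the proof.
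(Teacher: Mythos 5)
The statement you are trying to prove is stated in the paper as a \emph{conjecture} (due to Ih, refined by Silverman and Zhang), not as a theorem: the paper offers no proof of it and explicitly records that only the case of elliptic curves is known (Baker--Ih--Rumely), precisely because the ingredients needed --- a strong (quantitative) form of equidistribution for torsion points and lower bounds for linear forms in (elliptic) logarithms --- are currently available only for one-dimensional algebraic groups. So there is no ``paper's proof'' to compare against, and your proposal should be judged as an attempt at an open problem.

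As such an attempt, it contains the standard strategy but also the exact gaps that keep the problem open, and you should not present them as secondary technical nuisances. First, the interchange of limit and integral for $\lambda_{D,v}$ at the places $v\in S$ is the heart of the matter, not a footnote: Szpiro--Ullmo--Zhang equidistribution applies only to continuous bounded test functions, and $\lambda_{D,v}$ blows up along $D$. The $D_S$-integrality hypothesis controls the places \emph{outside} $S$ and says nothing about how close the Galois conjugates $t_n^\sigma$ come to $D$ at places \emph{in} $S$; a pigeonhole argument cannot rule out that a positive proportion of the mass of the Galois orbit concentrates near $D$ at some $v\in S$, destroying the limit identity. What is needed is a quantitative equidistribution statement together with a Diophantine lower bound on $\mathrm{dist}_v(t_n^\sigma, D)$ in terms of the order of $t_n$ --- the analogue of Bosser's theorem used in this paper (Fact 4.10) and of the David--Hirata-Kohno bounds used by Baker--Ih--Rumely --- and no such bound is known for abelian varieties of dimension $\ge 2$. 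Second, the ``strictly positive unless a torsion translate of an abelian subvariety'' step is not a formal consequence of Ullmo--Zhang: the Bogomolov conjecture concerns the essential minimum of points on a subvariety, whereas here you need positivity of $\sum_{v}\int \lambda_{D,v}\,d\mu_v$, which (after the product formula) amounts to a strict positivity of a height of $D$ itself; relating the two and making it uniform is an additional unproven rigidity statement. Your outline is a reasonable description of why one \emph{believes} the conjecture, but it is not a proof.
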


The next conjecture is for algebraic dynamical systems, and it is modeled by Conjecture~\ref{Ih conjecture 1} for elliptic curves where torsion points are seen as preperiodic points for the multiplication-by-$2$-map. In general, for any rational map $f$, we say that $\alpha$ is a preperiodic point for $f$ if its orbit under $f$ is finite. As always in arithmetic dynamics, we denote by $f^n$ the $n$-th iterate of $f$. So, $\alpha$ is preperiodic if and only if there exist nonnegative integers $m\ne n$ such that $f^m(\alpha)=f^n(\alpha)$ (for more details on the theory of arithmetic dynamics we refer the reader to Silverman's book \cite{Silverman book}).

\begin{conjecture}
\label{Ih conjecture 2}
Let $f$ be a rational function of degree at least $2$ defined over $k$, and let $\alpha\in \bP^1(k)$ be non-preperiodic for $f$. Then there are only finitely many preperiodic points  which are $S$-integral with
respect to $\alpha$, i.e. whose Zariski closures in $\bP^1/\Spec(\fo_{k,S})$ do not meet the Zariski closure of $\alpha$.
\end{conjecture}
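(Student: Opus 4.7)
The plan is to mimic the strategy of Baker--Ih and Petsche for special rational maps: combine arithmetic equidistribution of small points under the canonical height $\hhat_f$ with a quantitative local calculation at each place $v\in S$.

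First, I would assume for contradiction that there is an infinite sequence of pairwise distinct preperiodic points $\{\beta_n\}\subset\bP^1(\overline{k})$ that are $S$-integral with respect to $\alpha$. Since $\hhat_f(\beta_n)=0$ and $\hhat_f$ satisfies the Northcott property, the degrees $d_n:=[k(\beta_n):k]$ must tend to infinity. After passing to a subsequence, the Baker--Rumely/Favre--Rivera-Letelier/Chambert-Loir theorem then guarantees that the Galois orbits of $\beta_n$ equidistribute to the canonical $f$-invariant measure $\mu_{f,v}$ on the Berkovich line $\Aberk{v}$ at every place $v$ of $k$.

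Second, I would translate the $S$-integrality hypothesis into the uniform lower bound
\begin{equation*}
\sum_{v\in S}\frac{1}{d_n}\sum_{\sigma\col k(\beta_n)\into\overline{k_v}}\log|\sigma(\beta_n)-\alpha|_v \;\geq\; -\frac{C}{d_n},
\end{equation*}
which follows from the product formula applied to $\beta_n-\alpha$ together with the fact that outside $S$ every local contribution is non-negative. The goal is then to pass to the limit as $n\to\infty$ and interpret the left-hand side via equidistribution against the test function $x\mapsto\log|x-\alpha|_v$, thereby obtaining a global identity that contradicts the uniform bound.

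Third, and this is the main obstacle, the function $\log|x-\alpha|_v$ has a logarithmic singularity at $x=\alpha$, so it is not a legitimate continuous test function for weak equidistribution. The standard way around this is to view the Galois sum potential-theoretically as an Arakelov-style intersection number on $\Aberk{v}$ and to apply equidistribution to a smooth regularisation of $\log|x-\alpha|_v$, controlling the error by showing that the Galois conjugates of $\beta_n$ do not cluster too fast near $\alpha$. Ruling out such anomalous concentration at every place in $S$ is essentially the content of the conjecture, and in the number-field setting it is known only in special cases (Latt\`es, Chebyshev, power maps, certain good-reduction situations). In the Drinfeld-module setting that occupies the remainder of the paper, the analogous non-concentration estimate should follow from the rigid-analytic dynamics of $\phi$ on $\Aberk{v}$ together with the hypothesis that $\alpha$ is non-preperiodic (respectively non-torsion); this local non-clustering step is the part of the argument I expect to be hardest.
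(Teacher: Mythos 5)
The statement you were asked about is Conjecture~\ref{Ih conjecture 2}, which is an \emph{open conjecture} (due to Ih, refined by Silverman and Zhang); the paper does not prove it and only establishes Drinfeld-module analogues (Theorems~\ref{Ih's conjecture result}--\ref{combination result}). Your proposal is therefore correctly structured as a strategy sketch rather than a proof, and you are candid that it does not close. The gap you name is the right one, but it is worth being precise about why it is fatal as written: the product formula plus $S$-integrality gives you an exact identity (the sum over all $v$ of the averaged local terms vanishes for each $n$, with the terms outside $S$ equal to the constant $\log^+|\alpha|_v$), while equidistribution at each place would identify the limit of the full sum with $\hhat_f(\alpha)>0$ --- so the contradiction is immediate \emph{once} you can (i) interchange the limit in $n$ with the sum over places, and (ii) integrate the singular test function $\log|x-\alpha|_v$ against the limit measure. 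Step (i) is handled by $S$-integrality (only finitely many places contribute nontrivially), but step (ii) is not a technicality one can regularize away in general: controlling how fast Galois conjugates of $\beta_n$ can approach $\alpha$ requires a genuine Diophantine input, and no such input is available for a general rational map over a number field.

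For comparison, in the Drinfeld setting the paper supplies exactly the two missing local estimates: at finite places (of good reduction) Lemma~\ref{discreteness of the torsion} and Corollary~\ref{not too close to any number} show torsion points cannot cluster $v$-adically near any fixed point, and at infinite places the clustering is controlled by Bosser's lower bound for linear forms in Drinfeld logarithms (Fact~\ref{Bosser}) combined with the \emph{quantitative} equidistribution statement of Theorem~\ref{strong equidistribution} --- a plain weak-convergence statement is not enough, since one must count conjugates in shrinking annuli around $\alpha$ whose radii depend on $n$. Your sketch gestures at ``rigid-analytic dynamics'' as the source of the non-concentration estimate, but in fact the decisive ingredient is transcendence-theoretic (linear forms in logarithms), which is precisely why the number-field conjecture is only known for maps attached to one-dimensional algebraic groups. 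So: correct identification of the framework and of the obstruction, but the proposal does not constitute a proof, and the hardest step is not merely hard --- it is the open content of the conjecture.
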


In \cite{Ih}, Baker, Ih and Rumely prove the first cases of the above conjectures. They prove Conjecture~\ref{Ih conjecture 1} for elliptic curves $A$, which in particular provides a proof of Conjecture~\ref{Ih conjecture 2} for Latt\`es maps $f$. Also in \cite{Ih}, the authors prove Conjecture~\ref{Ih conjecture 2} when $f$ is a powering map (same proof works when $f$ is a Chebyshev polynomial). The ingredients for proving their results are a strong form of equidistribution for torsion points of $1$-dimensional algebraic groups (such as elliptic curves or $\bG_m$), lower bounds for linear forms in (elliptic) logarithms, and a good understanding of the local heights associated to a dynamical system coming from an algebraic group endomorphism. At this moment it seems difficult to extend the above results beyond the case of $1$-dimensional abelian varieties, or the case of rational maps associated to endomorphisms of $1$-dimensional algebraic groups.

It is natural to consider the above conjectures in characteristic $p$, but one has to be careful in their reformulation. Indeed, if $A$ is defined over $\Fpbar$ then all its torsion points are also defined over $\Fpbar$ and thus one can find infinitely many torsion points which are $S$-integral with respect to a divisor $D$ of $A$. Similarly, if $f$ is a rational map defined over $\Fpbar$, all its preperiodic points are also in $\Fpbar$ and thus again one can find infinitely many points which are $S$-integral with respect to a non-preperiodic point $\alpha$. On the other hand, the Drinfeld modules have always proven  to be the right analogue in characteristic $p$ of abelian varieties. Therefore we propose to study in this paper analogues of the above conjectures for Drinfeld modules.  One could consider an analogue of Conjecture~\ref{Ih conjecture 1} for $T$-modules acting on $\bG_a^n$, but similar to the case over number fields, proving any result towards Conjecture~\ref{Ih conjecture 1} (or its analogues) for groups varieties of dimension larger than $1$ would be difficult.

We start by defining Drinfeld modules (for more details, see Section~\ref{note}).  
Let $p$ be a prime number, let $q$ be a power of $p$, and let $K$ be a finite extension of $\Fq(t)$. A Drinfeld module $\Phi$ (of generic characteristic) is a ring homomorphism from $\Fq[t]$ to $\End_K(\bG_a)$. We fix an algebraic closure $\bar{K}$ of $K$, and we let $\Kbar$ be the separable closure of $K$ inside $\bar{K}$. 

Since each Drinfeld module is isomorphic (over $\Kbar$) to a Drinfeld module for which $\Phi_t:=\Phi(t)$ is a monic polynomial, we assume  from now on that $\Phi$ is indeed in \emph{normal form} i.e., $\Phi_t$ is monic. Note that a Drinfeld module $\Psi$ is isomorphic to $\Phi$ (over $\Kbar$) if there exists  $\gamma\in\Kbar$ such that $\Psi_t(x)=\gamma^{-1}\Phi_t(\gamma x)$. So, our results about $S$-integral points  are not affected by replacing $\Phi$ with an isomorphic Drinfeld module since conjugating by $\gamma$ only affects the finitely many places where $\gamma$ is not an $S$-unit (for a precise definition for $S$-integrality we refer the reader to Subsection~\ref{S-integrality subsection}). 

The points of $\Kbar$ which have finite orbit under the action of $\Phi$ are called torsion;  we denote by $\Phi_{\tor}$ the set of all torsion points for $\Phi$.

A Drinfeld module may have complex multiplication (similar to the case of abelian varieties), i.e. there exist endomorphisms $g$ of $\bG_a$ defined over $\Kbar$ such that $g\circ \Phi_t =\Phi_t\circ g$. In our results we assume $\Phi$ does not have complex multiplication since we employ a strong equidistribution theorem from \cite{equi}  for torsion points of Drinfeld modules which uses the assumption that $\Phi$ has no complex multiplication.

The places of $K$ split in two categories: \emph{infinite places} and \emph{finite places} depending on whether they lie over  the place $v_\infty$ of $\Fq(t)$, for which $v_\infty(f/g)=\deg(g)-\deg(f)$, for all nonzero $f,g\in\Fq[t]$. We assume $\Phi$ has good reduction at all its finite places, i.e., for all finite places $v$ of $K$, the coefficients of $\Phi_t$ are $v$-integral (recall that we already assumed that $\Phi_t$ is monic). Also, for each place $v$ of $K$ we fix an extension of it to $\Kbar$. Then we can prove the following result.

\begin{thm}
\label{Ih's conjecture result}
Assume $\Phi$ is in normal form and it has good reduction at all finite places of $K$, and also assume that $\Phi$ has no complex multiplication. Let $\beta\in K$ be a nontorsion point for $\Phi$, and let $S$ be a finite set of places of $K$. Then there exist at most finitely many $\gamma\in\Phi_{\tor}$ such that $\gamma$ is $S$-integral with respect to $\beta$.
\end{thm}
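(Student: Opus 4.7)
The argument follows the Baker--Ih--Rumely template from \cite{Ih} for the multiplicative group, with the equidistribution theorem of \cite{equi} for torsion of Drinfeld modules playing the role of Bilu's theorem.

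Assume for contradiction that there is an infinite sequence of pairwise distinct torsion points $\gamma_n\in\Phi_{\tor}$, each $S$-integral with respect to $\beta$. By enlarging $S$ (finitely) we may assume it contains all places of bad behaviour for $\Phi$ and all places where $|\beta|_v>1$; this only strengthens the $S$-integrality hypothesis.

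\textbf{Step 1 (Equidistribution).} For each place $v$ of $K$ and each $n$, let
\[
\mu_{n,v} \;=\; \frac{1}{[K(\gamma_n):K]}\sum_{\sigma}\delta_{\sigma(\gamma_n)}
\]
be the empirical probability measure on $\Aberk{v}$ supported on the Galois orbit of $\gamma_n$. The hypothesis that $\Phi$ has no complex multiplication allows us to apply the equidistribution theorem of \cite{equi}: $\mu_{n,v}$ converges weakly to a canonical $v$-adic measure $\mu_{\Phi,v}$ on $\Aberk{v}$.

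\textbf{Step 2 (Passage to the limit through the logarithmic singularity).} The test function $z\mapsto \log|z-\beta|_v$ has a pole at $\beta$ and is not continuous on $\Aberk{v}$, so the convergence in Step 1 does not immediately integrate against it. However, by standard non-archimedean potential-theoretic regularization (truncate the logarithm from below at level $-M$, apply weak convergence for each $M$, and let $M\to\infty$), together with the crucial fact that $\mu_{\Phi,v}$ puts no mass at the single point $\beta$, one obtains
\[
\lim_{n\to\infty}\int\log|z-\beta|_v\, d\mu_{n,v}(z) \;=\; \int\log|z-\beta|_v\, d\mu_{\Phi,v}(z).
\]
The Mahler-type identity associated with the monic dynamical system $\Phi_t$ then gives
\[
\sum_v n_v\int \log|z-\beta|_v\, d\mu_{\Phi,v}(z) \;=\; \hhat_\Phi(\beta),
\]
which is strictly positive since $\beta\in K$ is non-torsion and $\hhat_\Phi$ vanishes precisely on $\Phi_{\tor}$.

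\textbf{Step 3 (Contradiction via the product formula).} For each $n$ and every $\sigma\in\Gal(K^{\sep}/K)$, the element $\sigma(\gamma_n)-\beta$ is a nonzero element of $K^{\sep}$, so the product formula followed by averaging over $\sigma$ yields
\[
\sum_v n_v\int \log|z-\beta|_v\, d\mu_{n,v}(z) \;=\; 0.
\]
Write this as $A_n+B_n=0$, where $A_n$ is the sum restricted to $v\in S$ and $B_n$ the sum restricted to $v\notin S$. The $S$-integrality of $\gamma_n$ with respect to $\beta$ forces $\log|\sigma(\gamma_n)-\beta|_v\geq 0$ for every $v\notin S$ and every $\sigma$, so $B_n\geq 0$. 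By our enlargement of $S$, the integrals against $\mu_{\Phi,v}$ vanish at all $v\notin S$, so Step 2 yields
\[
\lim_{n\to\infty}A_n \;=\; \sum_{v\in S} n_v\int \log|z-\beta|_v\, d\mu_{\Phi,v}(z) \;=\; \hhat_\Phi(\beta) \;>\; 0.
\]
But then $B_n=-A_n$ converges to $-\hhat_\Phi(\beta)<0$, contradicting $B_n\geq 0$.

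\textbf{Main obstacle.} The technical heart of the argument is Step 2: rigorously exchanging limit and integral for the singular test function $\log|z-\beta|_v$ on the Berkovich line. In characteristic zero this is standard (Baker--Rumely, Favre--Rivera-Letelier, Chambert-Loir), resting on the non-atomicity of the equilibrium measure. In positive (generic) characteristic, verifying the non-atomicity of $\mu_{\Phi,v}$ at the specific point $\beta$---and controlling wild ramification effects---is the main point that must be addressed; the non-CM hypothesis is what makes the input equidistribution theorem available in the first place.
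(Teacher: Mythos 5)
Your overall architecture coincides with the paper's: Theorem~\ref{Ih's conjecture result} is deduced from the assertion that $\lim_{n}\frac{1}{[K(\gamma_n):K]}\sum_\sigma \log|\beta-\gamma_n^\sigma|_v$ exists at each place and sums over $v$ to $\hhat_\Phi(\beta)$ (this is the paper's Theorem~\ref{alternative way of computing the canonical height}), combined with $S$-integrality and the product formula exactly as in your Step 3. The genuine gap is in Step 2, and it is not a deferrable technicality. Truncating $\log|z-\beta|_v$ from below at $-M$ and invoking weak convergence yields only the inequality $\limsup_n\int\log|z-\beta|_v\,d\mu_{n,v}\le\int\log|z-\beta|_v\,d\mu_{\Phi,v}$; non-atomicity of $\mu_{\Phi,v}$ at $\beta$ makes the right-hand side finite but says nothing about how close an individual conjugate $\gamma_n^\sigma$ may come to $\beta$, so it cannot produce the reverse inequality for the $\liminf$. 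Yet the reverse inequality is exactly what your contradiction needs: from $A_n+B_n=0$ and $B_n\ge 0$ you get $A_n\le 0$, so you must prove $\liminf_n A_n\ge\hhat_\Phi(\beta)>0$, a \emph{lower} bound at each $v\in S$. A single conjugate at distance $e^{-[K(\gamma_n):K]^2}$ from $\beta$ would destroy this, and weak convergence does not exclude it.

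The paper closes this gap with Diophantine and quantitative inputs that are absent from your proposal. At the infinite places it uses Bosser's lower bound for linear forms in Drinfeld logarithms (Fact~\ref{Bosser}) to get $\log|\gamma_n^\sigma-\beta|_v\ge C_0+C_1d_n\log d_n$, the Pink--R\"utsche openness theorem to get $[K(\gamma_n):K]\gg q^{rd_n}$, and a strong equidistribution statement with a power-saving error term (Theorem~\ref{strong equidistribution}, Corollary~\ref{corollary strong equidistribution}) to count conjugates in shrinking annuli around $\beta$; the qualitative convergence of your Step 1 is insufficient for this counting. At the finite places it uses the everywhere-good-reduction hypothesis to identify the filled Julia set with the unit disk and a Tate--Voloch-type discreteness result (Lemma~\ref{discreteness of the torsion} and Corollary~\ref{not too close to any number}): only finitely many torsion points lie within $v$-adic distance $s<1$ of a fixed point, which is what rules out conjugates accumulating at $\beta$ there. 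Until the $\liminf$ direction is established by such means, your argument does not close.
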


The proof of Theorem~\ref{Ih's conjecture result} goes through an intermediate result which offers an alternative way of computing the canonical height $\hhat_\Phi(x)$ of any point $x\in K$ (for more details, see Section~\ref{note}).
\begin{thm}
\label{alternative way of computing the canonical height}
Let $\Phi$ be a Drinfeld module as in Theorem~\ref{Ih's conjecture result}.  
Let $\beta\in K$, and let $\{\gamma_n\}\subset\Phi_{\tor}$ be an infinite sequence. Then
$$\sum_{v\in\Omega_K}\lim_{n\to\infty}\frac{1}{[K(\gamma_n):K]}\sum_{\sigma\in\Gal(\Kbar/K)} \log|\beta-\gamma_n^{\sigma}|_v=\hhat_\Phi(\beta).$$
\end{thm}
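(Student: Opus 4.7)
The plan is an adelic equidistribution argument. For each place $v$ of $K$, let $\mu_{\Phi,v}$ denote the canonical probability measure on the Berkovich affine line $\Aberk{v}$ attached to $\Phi_t$ (the unique $\Phi_t$-invariant probability measure, supported on the $v$-adic Julia set). Its logarithmic potential is the local canonical height, up to a normalizing constant:
$$\int_{\Aberk{v}} \log|\beta - y|_v\, d\mu_{\Phi,v}(y) \;=\; \hhat_{\Phi,v}(\beta) - c_v,$$
where $\hhat_\Phi = \sum_v \hhat_{\Phi,v}$ is the standard telescoping decomposition of the global canonical height, and the Robin-type constants $c_v$ satisfy $\sum_v c_v = 0$ by the product formula applied to the leading coefficients of the (monic) iterates $\Phi_t^n$.

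The equidistribution theorem of \cite{equi}, which requires the no-CM hypothesis, asserts that for any infinite sequence of distinct torsion points $\gamma_n \in \Phi_\tor$ the averaged Dirac measures
$$\mu_{n,v} \;:=\; \frac{1}{[K(\gamma_n):K]} \sum_{\sigma \in \Gal(\Kbar/K)} \delta_{\gamma_n^\sigma}$$
converge weakly to $\mu_{\Phi,v}$ on $\Aberk{v}$ at each $v$. The crux of the proof is to upgrade this weak convergence to convergence of integrals of the logarithmic test function $f_v(y) := \log|\beta - y|_v$, which has a pole at $y = \beta$ and is therefore not continuous on $\Aberk{v}$. I would do this by the standard truncation method: set $f_{v,\epsilon} := \max(f_v, \log\epsilon)$, apply weak convergence to each continuous $f_{v,\epsilon}$, let $\epsilon \to 0$ on the $\mu_{\Phi,v}$-side by monotone convergence (using that $\mu_{\Phi,v}$ has finite logarithmic energy), and control the tail $\int (f_v - f_{v,\epsilon})\, d\mu_{n,v}$ uniformly in $n$.

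This uniform tail control is the principal obstacle. It amounts to quantitative lower bounds for $|\beta - \gamma|_v$ as $\gamma$ varies over torsion points of $\Phi$, which hold because $\beta$ is not torsion; these are analogues for Drinfeld modules of the Mahler/Bilu-type height lower bounds already underlying the equidistribution theorem of \cite{equi}. Once local convergence $\lim_n \int f_v\, d\mu_{n,v} = \hhat_{\Phi,v}(\beta) - c_v$ is in hand, summation over $v$ finishes the argument. The sum is finite in practice: outside a finite set of places (those in $S$, the places of bad reduction of $\Phi$, and those where $\beta$ is not a unit) every term vanishes identically for all large $n$, so no interchange issue arises. Then $\sum_v c_v = 0$ and $\sum_v \hhat_{\Phi,v}(\beta) = \hhat_\Phi(\beta)$ give precisely the stated identity.
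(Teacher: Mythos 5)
Your overall architecture (local equidistribution of Galois orbits, the local canonical height as the potential of the canonical measure, truncation of the logarithmic test function, then summation over places) matches the paper's strategy, but the step you yourself flag as ``the principal obstacle'' --- uniform control of the tail $\int (f_v - f_{v,\epsilon})\,d\mu_{n,v}$ --- is exactly where the content of the theorem lies, and your proposed source for it is not correct. It does not follow from ``Mahler/Bilu-type height lower bounds already underlying the equidistribution theorem of \cite{equi}''. At the infinite places the paper needs two independent inputs: (i) Bosser's lower bound for linear forms in Drinfeld logarithms (Fact~\ref{Bosser}), giving $\log|\beta-\gamma|_v \ge C_0 + C_1 d\log d$ for $\gamma\in\Phi[Q]$ with $\deg(Q)=d$ --- a transcendence-type estimate that is neither a height bound nor contained in the equidistribution machinery; and (ii) a \emph{quantitative} equidistribution statement with a power-saving error term $O_\delta\left([K(\gamma):K]^{-\delta}\right)$ (Theorem~\ref{strong equidistribution} and Corollary~\ref{corollary strong equidistribution}), which is indispensable because the annuli used to estimate the singular part of the integrand shrink with $n$, so mere weak convergence says nothing about how many conjugates of $\gamma_n$ they contain. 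Without both inputs the truncation argument does not close.

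At the finite places your shortcut also fails: it is not true that outside a finite set of places ``every term vanishes identically for all large $n$''. The reduction of $\Phi$ at a finite place is a Drinfeld module over a finite field, where every residue is torsion, so at essentially every finite place there are infinitely many torsion points $\gamma$ with $|\beta-\gamma|_v<1$. The local limit is still $0$, but proving this requires the discreteness statement of Lemma~\ref{discreteness of the torsion} and Corollary~\ref{not too close to any number} (a Tate--Voloch-type result, which uses the good-reduction hypothesis): only finitely many torsion points lie in any disk $|x-\beta|_v<s<1$, which bounds the averages $\frac{1}{[K(\gamma_n):K]}\sum_\sigma \log|\beta-\gamma_n^\sigma|_v$ from below by $\log s$ for large $n$. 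In short, the skeleton of your argument is the right one, but the two Diophantine inputs that make the truncation work --- Bosser's theorem plus strong equidistribution at $v\in S_\infty$, and the $v$-adic isolation of torsion points at $v\notin S_\infty$ --- are asserted rather than supplied, and they are the theorem.
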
 
In the above result, and also later in the paper, a sum involving $\delta^\sigma$ over all $\sigma\in\Gal(\Kbar/K)$ is simply a sum over all the Galois conjugates of $\delta$.

Theorem~\ref{alternative way of computing the canonical height} answers a conjecture of Tucker and the author (\cite[Conjecture 5.2]{newdrin}).  It is worth pointing out that there is no \emph{Bogomolov-type} statement of Ih's Conjecture for Drinfeld modules, i.e., it is possible to find infinitely many points $x$ of small canonical height which are $S$-integral with respect to a given nontorsion point $\beta$ (see Example~\ref{no Bogomolov-Ih}). 

Theorem~\ref{Ih's conjecture result} may be interpreted as follows: for a nontorsion point $\beta$ there exist at most finitely many $\gamma$ such that $\Phi_Q(\gamma)=0$ for some nonzero polynomial $Q\in\Fq[t]$, and $\gamma$ is $S$-integral with respect to $\beta$. In other words, there exist at most finitely many points in the backward orbit of $0$ (under $\Phi$) which are $S$-integral with respect to $\beta$. In general, for each $\alpha\in K$ we let $\OO_\Phi^-(\alpha)$ be the \emph{backward orbit} of $\alpha$ under $\Phi$, i.e. the set of all $\beta\in\Kbar$ such that there exists some nonzero $Q\in \Fq[t]$ such that $\Phi_Q(\beta)=\alpha$. So, Theorem~\ref{Ih's conjecture result} yields that for each torsion point $\alpha$ of $\Phi$, there exist at most finitely many $\gamma\in \OO_\Phi^-(\alpha)$ which are $S$-integral with respect to the nontorsion point $\beta$. 
 It is natural to ask whether Theorem~\ref{Ih's conjecture result} holds when the point $\alpha$ is nontorsion. 

\begin{thm}
\label{backward orbit result}
Let $\Phi$ be a Drinfeld module without complex multiplication, and let $\alpha,\beta\in K$ be nontorsion points. If $\alpha\notin\Phi_{\tor}$, then there exist at most finitely many $\gamma\in \OO_\Phi^-(\alpha)$ such that $\gamma$ is $S$-integral with respect to $\beta$.
\end{thm}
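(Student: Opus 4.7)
The plan is to imitate the derivation of Theorem \ref{Ih's conjecture result} from Theorem \ref{alternative way of computing the canonical height}, but with the sequence of torsion points $\{\gamma_n\}$ replaced by a sequence in $\OO_\Phi^{-}(\alpha)$ whose heights tend to zero. Suppose for contradiction we have infinitely many $\gamma_n\in\OO_\Phi^-(\alpha)$ with $\Phi_{Q_n}(\gamma_n)=\alpha$, each $S$-integral with respect to $\beta$. Since each fiber $\Phi_{Q}^{-1}(\alpha)$ is finite (of size $q^{r\deg Q}$, where $r$ is the rank of $\Phi$), we may pass to a subsequence and assume $\deg Q_n\to\infty$. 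The functional equation for the canonical height then gives
$$\hhat_{\Phi}(\gamma_n)=\frac{\hhat_\Phi(\alpha)}{q^{r\deg Q_n}}\longrightarrow 0,$$
using $\hhat_\Phi(\alpha)>0$ since $\alpha$ is nontorsion, so $\{\gamma_n\}$ is a sequence of points of height tending to zero whose fields of definition grow unboundedly.

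The next step is to extend Theorem \ref{alternative way of computing the canonical height} from torsion sequences to our sequence $\{\gamma_n\}$. The crucial observation is that for any two preimages $\gamma,\gamma'$ of $\alpha$ under $\Phi_{Q_n}$ one has $\Phi_{Q_n}(\gamma-\gamma')=0$, so $\gamma-\gamma'\in\Phi[Q_n]$; fixing one such preimage $\gamma_n^*$, the fiber $\Phi_{Q_n}^{-1}(\alpha)$ becomes the coset $\gamma_n^*+\Phi[Q_n]$, on which the absolute Galois group acts by affine transformations built out of its linear action on the torsion submodule $\Phi[Q_n]$. This torsor structure lets one reduce the required equidistribution of the normalized Galois orbits
$$\frac{1}{[K(\gamma_n):K]}\sum_{\sigma\in\Gal(\Kbar/K)}\delta_{\gamma_n^\sigma}$$
on the Berkovich projective line at each place $v$ (to the canonical measure $\mu_{\Phi,v}$) to the analogous equidistribution of torsion points that underlies Theorem \ref{alternative way of computing the canonical height} via \cite{equi}. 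Combined with the same potential-theoretic control of the singular test function $\log|\beta-\cdot|_v$ as in the proof of Theorem \ref{alternative way of computing the canonical height}, this yields the backward-orbit height formula
$$\sum_{v\in\Omega_K}\lim_{n\to\infty}\frac{1}{[K(\gamma_n):K]}\sum_{\sigma\in\Gal(\Kbar/K)}\log|\beta-\gamma_n^\sigma|_v=\hhat_\Phi(\beta),$$
a strictly positive number since $\beta$ is nontorsion.

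With this extended height formula in hand, the conclusion mirrors the deduction of Theorem \ref{Ih's conjecture result}: by definition of $S$-integrality and Galois equivariance, each conjugate $\gamma_n^\sigma$ is also $S$-integral with respect to $\beta$, so $\log|\beta-\gamma_n^\sigma|_v\geq 0$ for every place $v\notin S$; the product formula then forces the contributions from $v\in S$ to be non-positive, and a bound on these finitely many local limits (as in the proof of Theorem \ref{Ih's conjecture result}) contradicts the positivity of $\hhat_\Phi(\beta)$. The main obstacle is the middle step: while the coset structure $\gamma_n^*+\Phi[Q_n]$ makes it plausible that equidistribution of torsion orbits from \cite{equi} upgrades to equidistribution of Galois orbits of $\gamma_n$, one must carefully track the twist by the basepoint $\gamma_n^*$ (whose Galois action is affine rather than linear) and verify that this twist does not disrupt weak convergence on the Berkovich line, nor the integrability of $\log|\beta-\cdot|_v$ near its singularity. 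This is also the step that most essentially exploits the absence of complex multiplication.
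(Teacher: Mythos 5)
Your outline diverges from the paper's argument, and it stalls at precisely the step you yourself flag as ``the main obstacle'': the extension of Theorem~\ref{alternative way of computing the canonical height} from torsion sequences to the sequence $\{\gamma_n\}\subset\OO_\Phi^-(\alpha)$. The torsor observation $\Phi_{Q_n}^{-1}(\alpha)=\gamma_n^*+\Phi[Q_n]$ does not by itself reduce the needed equidistribution to the torsion case, for two reasons. First, the Galois orbit of $\gamma_n$ is only a sub\emph{set} of that coset; to know it occupies a bounded-index (hence positive-proportion) part of the fiber, so that equidistribution of the full fiber transfers to the orbit, you need exactly the openness of the image of the Kummer-type representation $\tilde\Psi$ due to H\"aberli \cite{Haberli} and Pink \cite{Pink-Kummer} (Theorem~\ref{bounded number of orbits} in the paper) --- which your argument never invokes, and without which nothing prevents $[K(\gamma_n):K]$ from being tiny compared to $q^{r\deg Q_n}$. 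Second, even granting weak equidistribution (which for points of small height is plausible but is not what \cite{equi} proves), the test function $\log|\beta-\cdot|_v$ is unbounded near $\beta$; in the torsion case the singular contribution at $v\in S_\infty$ is controlled by the quantitative equidistribution (Theorem~\ref{strong equidistribution}) \emph{together with} Bosser's lower bound (Fact~\ref{Bosser}) for $\log|\beta-\gamma|_\infty$ with $\gamma$ torsion. The analogous lower bound for $\log|\beta-\gamma_n^\sigma|_\infty$ with $\gamma_n^\sigma$ a $Q_n$-division point of the nontorsion point $\alpha$ is a genuinely different Diophantine statement, not supplied by the paper and not a consequence of the coset structure. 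So the proposal is incomplete where it matters.

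For comparison, the paper's proof avoids equidistribution altogether for this theorem and instead follows Sookdeo's strategy \cite{Vijay}. Theorem~\ref{bounded number of orbits} shows the number $d_Q(\alpha)$ of Galois orbits in $\Phi_Q^{-1}(\alpha)$ is bounded; combined with its multiplicativity in coprime $Q$ and monotonicity under divisibility, $d_Q(\alpha)$ stabilizes, so there is a fixed $P$ such that for $R=\gcd(P,Q)$ every fiber $\Phi_{Q/R}^{-1}(\delta)$ with $\delta\in\Phi_R^{-1}(\alpha)$ is a single Galois orbit. Then, for $\gamma\in\Phi_Q^{-1}(\alpha)$ which is $S$-integral with respect to $\beta$, multiplying the equalities $|\beta-\gamma^\sigma|_v=1$ over the conjugates in one such fiber yields $|\Phi_{Q/R}(\beta)-\delta|_v=1$, i.e.\ $\Phi_{Q/R}(\beta)$ is $S(\delta)$-integral with respect to $\delta$; the Siegel-type theorem for Drinfeld modules \cite{siegel} then allows only finitely many such $Q/R$, and there are only finitely many $\delta$. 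If you want to pursue your route, you would in any case need to import H\"aberli's theorem and establish a Bosser-type lower bound for division points of $\alpha$, at which point the paper's more elementary reduction is considerably shorter.
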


Theorem~\ref{backward orbit result} is an analogue for Drinfeld modules of a result conjectured by Sookdeo in \cite{Vijay}. Motivated by a question of Silverman for $S$-integral points in the forward orbit of a rational map defined over $\Qbar$, Sookdeo \cite[Conjecture~1.2]{Vijay} asks whether for a rational map $f$ defined over $\Qbar$, and for a given non-preperiodic point $\alpha$ of $\Phi$, there exist at most finitely many points $\gamma$ in the backward orbit of $\alpha$ (i.e., $f^n(\gamma)=\alpha$ for some nonnegative integer $n$) such that $\gamma$ is $S$-integral with a given non-preperiodic point $\beta$. In \cite{Vijay}, Sookdeo proves that his conjecture holds when $f$ is either a powering map, or a Chebyshev polynomial. For an arbitrary rational map $f$, the conjecture appears to be difficult. Furthermore, Sookdeo presents a general strategy of attacking his conjecture by reducing it to proving that the number of Galois orbits for $f^n(z)-\beta$ is bounded above independently of $n$.

We observe that Theorems~\ref{backward orbit result} and \ref{Ih's conjecture result} combine to yield the following result.

\begin{thm}
\label{combination result}
Let $\Phi$ and $K$ be as in Theorem~\ref{Ih's conjecture result}, and let $\alpha,\beta\in K$ such that $\beta$ is not a torsion point for $\Phi$. Then there exist at most finitely many $\gamma\in\OO_\Phi^-(\alpha)$ such that $\gamma$ is $S$-integral with respect to $\beta$.
\end{thm}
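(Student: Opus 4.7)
The plan is a short case analysis depending on whether the point $\alpha$ is a torsion point for $\Phi$ or not. Note that the only hypothesis placed on the base point $\beta$ is that it is nontorsion, whereas $\alpha$ is allowed to be arbitrary in $K$. The two prior theorems cover the two possible cases, so the proof should amount to unwinding definitions and verifying that the hypotheses match.

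First I would dispose of the case where $\alpha$ is a nontorsion point. Since $\beta$ is also nontorsion by assumption, the hypotheses of Theorem~\ref{backward orbit result} are satisfied verbatim, and we directly conclude that only finitely many $\gamma\in\OO_\Phi^-(\alpha)$ are $S$-integral with respect to $\beta$.

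Next I would treat the case where $\alpha\in\Phi_{\tor}$. The key observation here is that the backward orbit of a torsion point is entirely contained in $\Phi_{\tor}$: if $\Phi_Q(\gamma)=\alpha$ for some nonzero $Q\in\Fq[t]$, and if $\Phi_R(\alpha)=0$ for some nonzero $R\in\Fq[t]$, then $\Phi_{RQ}(\gamma)=0$, so $\gamma$ is torsion. Therefore $\OO_\Phi^-(\alpha)\subseteq\Phi_{\tor}$, and the set of $\gamma\in\OO_\Phi^-(\alpha)$ which are $S$-integral with respect to $\beta$ is a subset of the corresponding set of torsion points. Since $\beta$ is nontorsion, Theorem~\ref{Ih's conjecture result} applies and yields that only finitely many torsion points can be $S$-integral with respect to $\beta$, hence a fortiori only finitely many elements of $\OO_\Phi^-(\alpha)$.

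There is no real obstacle here; the statement is essentially a dichotomy over the type of $\alpha$. The only thing worth being careful about is to ensure that the standing assumptions on $\Phi$ (normal form, good reduction at finite places, no complex multiplication) match between the two cited theorems and the present statement, which they do by hypothesis.
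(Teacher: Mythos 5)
Your proof is correct and matches the paper's own (implicit) argument exactly: the paper derives Theorem~\ref{combination result} by the same dichotomy, noting that when $\alpha\in\Phi_{\tor}$ the whole backward orbit lies in $\Phi_{\tor}$ so Theorem~\ref{Ih's conjecture result} applies, and otherwise Theorem~\ref{backward orbit result} applies directly.
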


It is essential to ask that  $\beta$ is not a torsion point for $\Phi$ in Theorem~\ref{combination result} since otherwise the result is false. Indeed, if  $\beta$ were equal to $0$, say, then one can  find infinitely many points $\gamma\in\OO_\Phi^-(\alpha)$ which are $S$-integral with respect to $0$, for any  set of places $S$ which contains the infinite places and also all the places where $\alpha$ is not a unit. Indeed, if $\Phi_Q(\gamma)=\alpha$ for some nonzero $Q\in\Fq[t]$ then for each $v\notin S$, we have that if $|\gamma|_v>1$ then $|\alpha|_v>1$, while if $|\gamma|_v<1$ then $|\alpha|_v<1$ (because $\Phi$ has good reduction at $v$). Hence, each $\gamma\in\OO_\Phi^-(\alpha)$ is $S$-integral with respect to $0$.

Instead of considering $S$-integral points in the backward orbit of a point $\alpha$ under a rational map $f\in\Qbar(z)$, one could consider $S$-integral points in its forward orbit. This question was settled by Silverman \cite{Sil-siegel} who showed (as an application of Siegel's classical theorem \cite{Siegel} on $S$-integral points on curves) that if there exist infinitely many $S$-integers in the forward orbit of $\alpha$, then $f\circ f$ is totally ramified at infinity, i.e., $f\circ f$ is a polynomial. In our setting for Drinfeld modules, studying $S$-integrality of points of the form $\Phi_Q(\alpha)$ (for arbitrary $Q\in \Fq[t]$) relative to a given point $\beta$ was done by Tucker and the author in \cite{equi} and \cite{siegel} (where a Siegel-type theorem for Drinfeld modules was proven). The result from \cite{siegel} will be used in the proof of Theorem~\ref{backward orbit result}. The results of our present paper complete the picture for Drinfeld modules by studying  the $S$-integrality of points in the backward orbit of a point.

Here is the plan of our paper. In Section~\ref{note} we introduce the notation and also state a crucial result (of H\"aberli \cite{Haberli} and Pink \cite{Pink-Kummer}) on Galois orbits of points in the backward orbit of a given point $\alpha$ (see Theorem~\ref{bounded number of orbits}). This last result allows us to prove Theorem~\ref{backward orbit result}. In Section~\ref{preliminary torsion} we derive some results regarding torsion of Drinfeld modules which have everywhere good reduction away from the places at infinity. Finally, we conclude our proof of Theorems~\ref{Ih's conjecture result} and~\ref{alternative way of computing the canonical height} in Section~\ref{equi}.

\medskip

\emph{Acknowledgments.} We thank Patrick Ingram and Thomas Tucker for several discussions and suggestions for improving this paper.


\section{Generalities}
\label{note}

\subsection{Drinfeld modules}

Let $p$ be a prime number, and let $q$ be a power of $p$. We let $K$ be a finite extension of $\Fq(t)$, let $\bar{K}$ be a fixed algebraic closure of $K$ and $K^{\sep}$ be the separable closure of $K$ inside $\Kbar$. 

Let $\Phi:\Fq[t]\lra \End_K(\bG_a)$ be a Drinfeld module of generic characteristic, i.e., $\Phi$ is a ring homomorphism with the property that
$$\Phi(t)(x):=\Phi_t(x)=tx+\sum_{i=1}^r a_i x^{q^i}$$
where $r\ge 1$ and each $a_i\in K$. We call $r$ the rank of the Drinfeld module $\Phi$.

We note that usually a Drinfeld module is defined on a ring $A$ of functions defined on a projective irreducible curve $C$ defined over $\Fq$, which are regular away from a given point $\eta$ on $C$. In our definition, $C=\bP^1$ and $\eta$ is the point at infinity. There is no loss of generality for using our definition since $\Fq[t]$ embeds into each such ring $A$ of functions.

At the expense of replacing $K$ with a finite extension, and replacing $\Phi$ with an isomorphic Drinfeld module $\Psi=\gamma^{-1}\circ \Phi\circ \gamma$, for a suitable $\gamma\in\Kbar$, we may assume $\Phi_t$ is monic. We say in this case that $\Phi$ is in normal form.

\subsection{Endomorphisms of $\Phi$} 

We say that $f\in K^{\sep}[x]$ is an endomorphism of $\Phi$ if $\Phi_t \circ f = f\circ \Phi_t$, or equivalently if $\Phi_a\circ f = f\circ \Phi_a$ for all $a\in\Fq[t]$.  The set of all endomorphisms of $\Phi$ is $\End_{K^{\sep}}(\Phi)$. Generically, $\Phi$ has no endomorphism other than $\Phi_a$ for $a\in\Fq[t]$; in this case $\End_{K^{\sep}}(\Phi)\isomto \Fq[t]$.

\subsection{Torsion points}

For each nonzero $Q\in \Fq[t]$, the set of all $x\in\Kbar$ such that $\Phi_Q(x):=\Phi(Q)(x)=0$ is defined to be $\Phi[Q]$; each such point $x$ is called a torsion point for $\Phi$. The set of all torsion points for $\Phi$ is denoted by
$$\Phi_{\tor}:=\bigcup_{Q\in\Fq[t]\setminus\{0\}} \Phi[Q].$$ 
One can show that for each nonzero $Q\in\Fq[t]$, we have $\Phi[Q]\isomto (\Fq[t]/(Q))^r$. Moreover, since each $\Phi_Q$ is a separable polynomial, we obtain that $\Phi_{\tor}\subset K^{\sep}$. Furthermore, since any polynomial $f\in\bar{K}[z]$ satisfying $f\circ \Phi_t=\Phi_t\circ f$ has the property that $f(\Phi_{\tor})\subset \Phi_{\tor}$ we conclude that $f\in\Kbar[z]$; i.e., all endomorphisms of $\Phi$ are indeed defined over $\Kbar$. For more details on Drinfeld modules, see \cite{Goss}.

\subsection{Places of $K$}

Let $\Omega_K$ be the set of all places of $K$. The places from $\Omega_K$ lie above the places of $\Fq(t)$. We normalize each corresponding absolute value $|\cdot |_v$ so that we have a well-defined product formula for each nonzero $x\in K$:
$$\prod_{v\in M_K}|x|_v=1.$$
Furthermore, we may assume $\log|t|_{\infty}=1$, where $\infty:=v_\infty$ is the place of $\Fq(t)$ corresponding to the degree of rational functions, i.e. (in exponential form), $v_\infty(f/g)=\deg(g)-\deg(f)$ for any nonzero $f,g\in\Fq[t]$. 
We let $S_\infty$ be the set of (infinite) places in $\Omega_K$ which lie above $v_\infty$. If $v\in \Omega_K\setminus S_\infty$, then we say that $v$ is a finite place for $\Phi$. For each $v\in\Omega_K$, we fix a completion $K_v$ of $K$ with respect to $v$, and also we fix an embedding of $\Kbar$ into $\bar{K_v}$. Finally, we let $\C_v$ be the completion with respect to $|\cdot |_v$ of a fixed algebraic closure of $K_v$.

\subsection{$S$-integrality}
\label{S-integrality subsection}

For a set of places $S \subset M_K$ and $\alpha\in K$, we say
that $\beta \in\Kbar$ is $S$-integral with respect to $\alpha$ if for
every place $v\notin S$, and for every morphism $\sigma\in\Gal(\Kbar/K)$ the following
are true:
\begin{itemize}
\item if $|\alpha|_v\le 1$, then $|\alpha-\beta^{\sigma}|_v\ge 1$.
\item if $|\alpha|_v >1$, then $|\beta^{\sigma}|_v\le 1$.
\end{itemize}
For more details about the definition of $S$-integrality, we refer the reader to \cite{Ih}.

We note that if $\Psi$ is an arbitrary Drinfeld module and $\Phi$ is a normalized Drinfeld module isomorphic to $\Psi$ through the conjugation by $\gamma\in\Kbar$ (i.e., $\Phi_t(x)=\gamma^{-1}\Psi_t(\gamma x)$), then all torsion points of $\Phi$ are of the form $\gamma^{-1}\cdot z$, where $z\in \Psi_{\tor}$. So, if we let $K$ be a finite extension of $\Fp(t)$ containing $\gamma$, and let $S$ be a finite set of places of $K$ containing all  places where $\gamma$ is not a unit, then $\alpha\in \Psi_{\tor}$ is $S$-integral with respect to $\beta\in K$ if and only if $\gamma^{-1}\alpha\in \Phi_{\tor}$ is $S$-integral with respect to $\gamma^{-1}\beta$. Therefore, our hypothesis from Theorems~\ref{Ih's conjecture result} and \ref{backward orbit result} (since a similar statement holds for the backward orbit of a nontorsion point) that $\Phi$ is in normal form is not essential. However, we prefer to work with Drinfeld modules in normal form since this makes easier to define the notion of good reduction for $\Phi$, and also simplifies some of the technical points in our proof.

\subsection{Good reduction for $\Phi$}

We say that the Drinfeld module $\Phi$ defined over $K$ has good reduction at a place $v$ of $K$, if each coefficient of $\Phi_t$ is integral at $v$, and moreover, the leading coefficient of $\Phi_t$ is a $v$-adic unit. It is immediate to see that if $\Phi$ has good reduction at $v$, then for each nonzero $Q\in \Fq[t]$ we have that $\Phi_Q$ has all its coefficients $v$-adic integers, while its leading coefficient is a $v$-adic unit. Clearly, a Drinfeld module cannot have good reduction at a place in $S_\infty$. Since we assumed that $\Phi$ is in normal form, the condition of having good reduction at $v$ reduces to the fact that $\Phi_t$ has all its coefficients $v$-adic integers.

\subsection{The action of the Galois group}

The Galois group $\Gal(K^{\sep}/K)$ acts on each $\Phi[Q]$. In particular, $\Gal(K^{\sep}/K)$ acts on the associated Tate module $\T_{Q}(\Phi)$ for $\Phi$ corresponding to irreducible, monic $Q\in \Fq[t]$; the Tate module  is isomorphic to $\Fq[t]_{(Q)}^r$ (where $\Fq[t]_{(Q)}$ is the $(Q)$-adic completion of $\Fq[t]$ at the prime ideal $(Q)$). We let $\T:=\prod_Q \T_Q$ (where the product is over all the monic irreducible polynomials $Q\in\Fq[t]$) be the profinite completion of $\Fq[t]$. Therefore we obtain a continuous representation
\begin{equation}
\label{this is rho}
\rho:\Gal(K^{\sep}/K)\lra \GL_r(\T).
\end{equation}
Pink and R\"{u}tsche \cite{Pink-Rutsche} proved that the image of $\rho$ is open, assuming that  $\Phi$ has no complex multiplication; their result is a Drinfeld module analogue of the classical Serre Openess Conjecture for (non-CM) abelian varieties. Initially, Breuer and Pink \cite{Breuer-Pink} proved that the image of $\rho$ is open assuming $K$ is a transcendental extension of $\Fq(t)$, which can be viewed as the function field version of the result from \cite{Pink-Rutsche}. In particular, Pink-R\"utsche result yields that for any torsion point $\gamma$ of order $Q$, where $\Q(t)\in\Fq[t]$ is a polynomial of degree $d$, there exists a positive constant $c_\Phi$ such that
$$\frac{[K(\gamma):K]}{\# \GL_r(\Fq[t]/(Q))}\ge c_\Phi.$$
Hence, $[K(\gamma):K]\gg q^{rd}$.

Now, let $x\in K$ be a nontorsion point for $\Phi$. For each monic irreducible $Q\in\Fq[t]$, for each $\sigma\in\Gal(K^{\sep}/K)$, and for each sequence $\{x_i\}_{i\in\N}\subset K^{\sep}$ such that $\Phi_{Q}(x_{i+1})=x_i$ for each $i\in\N$ while $\Phi_Q(x_1)=x$, we consider the map 
$$\sigma\mapsto \{\sigma(x_i)-x_i\}_{i\in\N}.$$
This yields another continuous representations
$$\Psi_Q:\Gal(K^{\sep}/K)\lra \T_Q$$
and more generally
$$\Psi:\Gal(K^{\sep}/K)\lra \T.$$
Furthermore, we have the following Galois action on the entire  backward orbit of $x$ under the action of $\Phi$
$$\tilde{\Psi}:\Gal(K^{\sep}/K)\lra \T\rtimes \GL_r(\T).$$
H\"{a}berli \cite{Haberli} proved that the image of $\tilde{\Psi}$ is open, again assuming that  $\Phi$ has no complex multiplication.  The assumption regarding the endomorphism ring for $\Phi$ is crucial; however Pink \cite{Pink-Kummer} proved an appropriately modified statement when $\Phi$ has complex multiplication. In particular, H\"{a}berli's result \cite{Haberli} yields that there exists a number $d:=d(x)\in\N$ (bounded above by the index of the image of ${\tilde \Psi}$ in $\T\rtimes \GL_r(\T)$) such that for each nonzero $Q\in\Fq[t]$, there are at most $d$ distinct Galois orbits containing all the preimages of $x$ under $\Phi_{Q}$. Hence, we have the following theorem.

\begin{thm}
\label{bounded number of orbits}
Let $\Phi:\Fq[t]\lra \End_K(\bG_a)$ be a Drinfeld module such that $\End_{K^{\sep}}(\Phi)\isomto\Fq[t]$. Then for each $\alpha\in K$ which is not torsion, there exists a number $d(\alpha)$ such that for each nonzero $Q\in\Fq[t]$ there exist at most $d(\alpha)$ distinct Galois orbits of points $y\in\Kbar$ satisfying $\Phi_Q(y)=x$. 
\end{thm}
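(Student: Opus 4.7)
The plan is to derive this theorem as an essentially immediate consequence of H\"aberli's openness theorem for $\tilde\Psi$, combined with a short orbit-counting argument on the torsor $\Phi_Q^{-1}(\alpha)$.

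First I would fix a nonzero $Q\in\Fq[t]$ and pick any preimage $x_1\in\Kbar$ of $\alpha$ under $\Phi_Q$. Since $\Phi_Q$ is separable with kernel $\Phi[Q]$, the map $w\mapsto x_1+w$ identifies $\Phi_Q^{-1}(\alpha)$ with the finite $\Fq[t]$-module $\Phi[Q]$ as a torsor. A direct computation gives
$$\sigma(x_1+w)=x_1+\bigl(\sigma(x_1)-x_1\bigr)+\rho_Q(\sigma)(w),$$
so under this identification each $\sigma\in\Gal(K^{\sep}/K)$ acts by the affine map $w\mapsto \Psi_Q(\sigma)+\rho_Q(\sigma)(w)$, where $\Psi_Q$ and $\rho_Q$ denote the reductions modulo $Q$ of the representations $\Psi$ and $\rho$ defined above. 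Therefore the Galois action on $\Phi_Q^{-1}(\alpha)$ factors through the image $H_Q$ of $\tilde\Psi$ inside the finite group $G_Q:=\Phi[Q]\rtimes \GL_r(\Fq[t]/(Q))$.

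Next I would apply H\"aberli's theorem \cite{Haberli}: under the hypothesis $\End_{K^{\sep}}(\Phi)\isomto\Fq[t]$, the image of $\tilde\Psi$ is open in $\T\rtimes\GL_r(\T)$, and hence has some finite index $d:=d(\alpha)$. Projecting along the natural surjection $\T\rtimes\GL_r(\T)\twoheadrightarrow G_Q$ immediately yields $[G_Q:H_Q]\le d$, uniformly in $Q$.

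To conclude, I would observe that the translation subgroup $\Phi[Q]\le G_Q$ already acts simply transitively on $\Phi_Q^{-1}(\alpha)$, so the ambient $G_Q$ acts transitively as well. Consequently, for any subgroup $H_Q\le G_Q$, the $H_Q$-orbits on this set biject with the double cosets $H_Q\backslash G_Q/\mathrm{Stab}(x_1)$, whose number is trivially bounded by $[G_Q:H_Q]\le d(\alpha)$. This gives the desired uniform bound independent of $Q$. The only substantive input is H\"aberli's deep openness result (with Pink's variant \cite{Pink-Kummer} handling the CM case excluded by our hypothesis); everything else is elementary orbit-counting bookkeeping, so no real obstacle remains.
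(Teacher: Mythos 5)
Your argument is correct and follows essentially the same route as the paper, which likewise treats the theorem as a direct consequence of H\"aberli's openness result for $\tilde\Psi$: the bound $d(\alpha)$ is the index of the image of $\tilde\Psi$ in $\T\rtimes\GL_r(\T)$, and this index dominates the number of Galois orbits on $\Phi_Q^{-1}(\alpha)$ uniformly in $Q$. The paper leaves the torsor identification $\Phi_Q^{-1}(\alpha)\cong\Phi[Q]$ and the double-coset count implicit, whereas you spell them out; there is no gap.
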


Theorem~\ref{bounded number of orbits} allows us to complete the proof of Theorem~\ref{backward orbit result}.

\begin{proof}[Proof of Theorem~\ref{backward orbit result}.]
Our proof follows the strategy from \cite[Theorems 2.5 and 2.6]{Vijay}. For each nonzero $Q\in\Fq[t]$ we let $d_Q(\alpha)$ be the number of Galois orbits contained in $\Phi_Q^{-1}(\alpha)$. Then $d_{Q_1}\le d_{Q_2}$ whenever $Q_1\mid Q_2$, and  $d_{Q_1Q_2}=d_{Q_1}d_{Q_2}$ whenever $\gcd(Q_1,Q_2)=1$. Indeed, since $\gcd(Q_1,Q_2)=1$ there exist $R_1,R_2\in\Fq[t]$ such that $R_1Q_1+R_2Q_2=1$. Then for each pair $(\delta_1,\delta_2)\in \Phi_{Q_1}^{-1}(\alpha)\times \Phi_{Q_2}^{-1}(\alpha)$, we let $\delta_{1,2}:=\Phi_{R_2}(\delta_1)+\Phi_{R_1}(\delta_2)\in\Phi_{Q_1Q_2}^{-1}(\alpha)$. Furthermore, if $(\delta_1',\delta_2')$ is another such pair such that $\delta_i'$ is not Galois conjugate with $\delta_i$ for some $i\in\{1,2\}$, then $\delta_{1,2}'$ is not Galois conjugate with $\delta_{1,2}$. Indeed, if there is some $\sigma\in\Gal(\Kbar/K)$ such that $\delta_{1,2}'=\delta_{1,2}^\sigma$, then also $\Phi_{Q_2}(\delta_{1,2}')=\Phi_{Q_2}(\delta_{1,2}^\sigma)$ and thus
$$\Phi_{Q_2R_2}(\delta_1')+\Phi_{R_1}(\alpha)=\Phi_{Q_2R_2}(\delta_1^\sigma)+\Phi_{R_1}(\alpha),$$
which yields that $\delta_1'-\delta_1^\sigma\in \Phi[Q_2R_2]$. On the other hand, $\delta_1'-\delta_1^\sigma\in \Phi[Q_1]$ (since both are in $\Phi_{Q_1}^{-1}(\alpha)$). Because $\gcd(Q_1,Q_2R_2)=1$ we conclude that $\delta_1'=\delta_1^\sigma$; similarly we get that $\delta_2'=\delta_2^\sigma$ which yields that the pairs of points $(\delta_1',\delta_2')$ and $(\delta_1,\delta_2)$ are Galois conjugate, contrary to our assumption.

So indeed, $d_{Q_1Q_2}=d_{Q_1}d_{Q_2}$ if $\gcd(Q_1,Q_2)=1$. By Theorem~\ref{bounded number of orbits}, $d_Q$ is bounded above independently of $Q$, which yields that there exists a nonzero $P:=P(\alpha)\in\Fq[t]$ such that for all $Q\in\Fq[t]$ we have $d_Q(\alpha)=d_R(\alpha)$, where $R=\gcd(P,Q)$. In particular, with the above notation, if $\delta\in\Phi_R^{-1}(\alpha)$ then all points in $\Phi_{Q/R}^{-1}(\delta)$ are Galois conjugates.

At the expense of replacing $S$ by a larger set  we may assume it contains all places where either $\beta$ or $\alpha$ is not a unit (note that neither $\alpha$ nor $\beta$ are equal to $0$ since they are nontorsion).  Thus for each $v\notin S$ we have that each point in $\OO_\Phi^-(\alpha)$ is also a $v$-adic unit. Therefore a point $\gamma\in\OO_\Phi^-(\alpha)$ is $S$-integral with respect to $\beta$ if and only if for all $v\notin S$ we have $|\gamma-\beta|_v= 1$.

Now, assume $\gamma\in\Phi_Q^{-1}(\alpha)$ is $S$-integral with respect to $\beta$ for some $Q\in\Fq[t]$. 
Let $R=\gcd(P,Q)$ (where $P:=P(\alpha)\in\Fq[t]$ is defined as above for $\alpha$), and let $\delta\in \Phi^{-1}_R(\alpha)$ such that $\gamma\in \Phi_{Q/R}^{-1}(\delta)$. Because $\gamma$ is $S$-integral with respect to $\beta$, then for each conjugate $\gamma^\sigma$, and for each place $v\notin S$, we have $|\gamma^\sigma - \beta|_v = 1$. Hence taking the product over all conjugates $\gamma^\sigma$ satisfying $\Phi_{Q/R}(\gamma^\sigma)=\delta$ we obtain
$$|\Phi_{Q/R}(\beta)-\delta|_v=|\Phi_{Q/R}(\beta - \gamma)|_v=\prod_{\sigma}|\beta - \gamma^\sigma|_v=1 .$$
In the above computation we used that $\Phi_t$ is monic and therefore the leading coefficient of $\Phi_{Q/R}$ is a $v$-adic unit since it is in $\Fpbar$.
 Also, we used that
$$|\Phi_{Q/R}(\beta-\gamma)|_v=\prod_{z\in \Phi[Q/R]} |\beta-\gamma-z|_v=\prod_{\sigma} |\beta-\gamma^\sigma|_v,$$
since all points in $\Phi_{Q/R}^{-1}(\delta)$ are Galois conjugates, and therefore for each $z\in \Phi[Q/R]$ we have that $\gamma+z=\gamma^\sigma$ for some $\sigma\in\Gal(\Kbar/K)$.
 
So, letting $S(\delta)$ be the places of $K(\delta)$ which lie above the places from $S$, we conclude that $\Phi_{Q/R}(\beta)$ is $S(\delta)$-integral with respect to $\delta$ (where the ground field is now $K(\delta)$). On the other hand, since $\beta\notin\Phi_{\tor}$, \cite[Theorem~2.5]{Siegel} yields that there exist at most finitely $Q/R\in\Fq[t]$ such that $\Phi_{Q/R}(\beta)$ is $S(\delta)$-integral with respect to $\delta$. Finally, noting that there are only finitely many 
$$\delta \in \bigcup_{R\mid P}\Phi_R^{-1}(\alpha),$$
we conclude our proof.
\end{proof}

\subsection{Canonical height associated to $\Phi$}

For a point $x$ in $\Kbar$ its usual Weil height is defined as
$$h(x)=\sum_{v\in M_K}\frac{1}{[K(x):K]} \sum_{\sigma\in\Gal(\Kbar/K)} \log^+\left|x^\sigma\right|_v.$$
where by $\log^+z$ we always denote $\log\max\{z,1\}$ (for any real
number $z$).

The \emph{global canonical height} $\hhat_{\Phi}(x)$ associated to the
Drinfeld module $\Phi$ was first introduced by Denis~\cite{denis92}
(Denis defined the global canonical heights for general $T$-modules
which are higher dimensional analogue of Drinfeld modules). 
For each $x\in\Kbar$, the global canonical height is defined  as
$$\hhat_\Phi(x)=\lim_{n\to\infty} \frac{h\left(\Phi_{t^n}(x)\right)}{q^{rn}},$$
where $h$ is the usual (logarithmic) Weil height on $\Kbar$. Denis \cite{denis92} showed that $\hhat$ differs from the usual Weil height $h$ by a bounded amount, and also showed that $x\in\Phi_{\tor}$ if and only if $\hhat_{\Phi}(x)=0$.

Following Poonen \cite{Poonen} and Wang \cite{Wang},
 for each  $x\in \C_v$, the  \emph{local canonical height} of $x$ is
 defined as follows   
$$\hhat_{\Phi,v}(x):=\lim_{n\to\infty}
\frac{ \log^+|\Phi_{t^n}(x)|_v}{q^{rn}}.$$
  It is immediate that
$\hhat_{\Phi,v}(\Phi_{t^i}(x))=q^{ir} \hhat_{\Phi,v}(x)$
and thus $\hhat_{\Phi,v}(x)=0$ whenever $x\in\Phi_{\tor}$.

Now, if $f(x)=\sum_{i=0}^d a_i x^i$ is any polynomial defined over $K$, then
$|f(x)|_v=|a_dx^d|_v>|x|_v$ when $|x|_v>M_v$, where
\begin{equation}
\label{definition N_v}
M_v=M_v(f):=\max\left\{\left(\frac{1}{|a_d|}\right)^{\frac{1}{d-1}},\max\left\{
    \left|\frac{a_i}{a_d}\right|^{\frac{1}{d-i}}\right\}_{0\le
    i<d}\right\}.
\end{equation}
Moreover, for a Drinfeld module $\Phi$, if $|x|_v>M_v(\Phi_t)$
then $\hhat_{\Phi,v}(x)=\log|x|_v+\frac{\log|a_d|_v}{d-1}>0$. In the special case that $\Phi$ has good reduction at $v$, then $M_v(\Phi_t)=1$ and so, if $|x|_v>1$ then $\hhat_{\Phi,v}(x)=\log|x|_v$, while if $|x|_v\le 1$ then $\hhat_{\Phi,v}(x)=0$. 

We define the \emph{$v$-adic filled Julia set} $\cJ_v$ be the set of all $x\in \C_v$ such that $\hhat_{\Phi,v}(x)=0$. So, we know that if $x\in \cJ_v$, then $|x|_v\le M_v$. In particular, if $v$ is a place of good reduction for $\Phi$, then $\cJ_v$ is the unit disk.

As shown
in \cite{Poonen} and \cite{Wang}, the global canonical height
decomposes into a sum of the corresponding local canonical
heights, as follows 
$$\hhat_\Phi(x)=\sum_{v\in M_K}\frac{1}{[K(x):K]}\sum_{\sigma\in\Gal(\Kbar/K)} \hhat_{\Phi,v}\left(x^{\sigma}\right).$$
We note that the theory of canonical height associated
to a Drinfeld module is a special case of the canonical heights
associated to morphisms on algebraic varieties developed by
Call~and~Silverman (see~\cite{Call-Silverman} for details). The definition for the canonical height functions given above
seems to depend on the particular choice of the map $\Phi_t$. On the
other hand, one can define the canonical heights
$\hhat_{\Phi}$  as in~\cite{denis92} by letting 
$$\hhat_\Phi(x)=\lim_{\deg(R)\to\infty}
\frac{h\left(\Phi_{R}(x)\right)}{q^{r\deg(R)}},$$
and similar formula for canonical local heights $\hhat_{\Phi,v}(x)$
where $R$ runs through all non-constant polynomials in $\Fq[t].$ In \cite{Poonen} and \cite{Wang} it is proven that  both definitions yield the same height
function. 

Finally, we observe that Ingram \cite{Ingram} defined the local canonical height in a slightly different way, i.e., Ingram's local canonical height $\lambda_{v}(x)$ for a point $x\in \C_v$ is defined as follows:
$$\lambda_v(x):=\hhat_{\Phi,v}(x) - \log|x|_v + c_v,$$
where $c_v:=-\log|a_r|_v/(q^r-1)$, where $a_r$ is the leading coefficient of $\Phi$. Using the product formula, we conclude that if $x\in\Kbar$ then $$\hhat_\Phi(x)=\sum_{v\in\Omega_K} \frac{1}{[K(x):K]} \sum_{\sigma\in\Gal(\Kbar/K)}\lambda_v\left(x^\sigma\right).$$ 
Furthermore, for normalized Drinfeld modules, one has
$$\lambda_v(x)=\hhat_{\Phi,v}(x)-\log|x|_v.$$
The advantage in Ingram's definition is that $\lambda_v(x):=G_v(x,0)$, where $G_v(x,y)$ is the Green's function for the dynamical system induced by the action of $\Phi$ on the affine line. 

\subsection{Points of small canonical height}

Next we give a construction showing that there is no Bogomolov-type statement of Ih's Conjecture for Drinfeld modules, i.e., there exist infinitely many points of canonical height arbitrarily small which are $S$-integral with respect to a given nontorsion point $\beta$.
\begin{example}
\label{no Bogomolov-Ih}
Indeed, let $\Phi$ be the Carlitz module corresponding to $\Phi_t(x)=tx+x^p$, where $p$ is a prime number, and let $\beta=1$. Then $\beta$ is nontorsion for $\Phi$ because $\deg_t(\Phi_{t^n}(1))=p^{n-1}$ for all $n\ge 1$. For each positive integer $n$, consider $x_n\in \Fp(t)^{\sep}$ which is a root of the equation
$$\Phi_{t^n}(z) \cdot (z-1)=1.$$
We let $S=\{v_\infty\}\subset \Omega_{\Fp(t)}$. Then it is immediate to see that $x_n$ must be $v$-integral for each $v\notin S$ (otherwise $|\Phi_{t^n}(x_n)|_v>1$ and also $|x_n-1|_v>1$, which is a contradiction). Furthermore, if $|x_n-1|_v<1$ for $v\notin S$, then $|\Phi_{t^n}(x_n)|_v>1$, which is again a contradiction since it would imply that $|x_n|_v>1$. Similarly, for each conjugate $x_n^\sigma$ we have $|x_n^\sigma-1|_v=1$ for each $v\notin S$. On the other hand, $x_n$ has height tending to $0$. Indeed, as shown by Denis \cite{denis92} there exists a positive constant $C$ such that $|h(z)-\hhat_\Phi(z)|\le C$ for all algebraic points $z$. So, 
$$p^n \hhat_\Phi(x_n)=\hhat_\Phi(\Phi_{t^n}(x_n))=\hhat_\Phi\left(\frac{1}{x_n-1}\right) \le h\left(\frac{1}{x_n-1}\right)+C=h(x_n)+C\le \hhat(x_n)+2C.$$
Hence, $\hhat_\Phi(x_n)\to 0$ as $n\to\infty$. 
\end{example}


\section{Preliminary results on torsion points}
\label{preliminary torsion}

In this Section we assume that $\Phi$ is in normal form and that it has good reduction at all finite places of $K$.

\begin{lemma}
\label{discreteness of the torsion}
Let $s$ be a real number in $(0,1)$. If $v\in\Omega_K\setminus S_\infty$, then there exist at most finitely many $x\in \Phi_{\tor}$ such that $|x|_v<s$.
\end{lemma}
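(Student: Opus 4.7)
The plan is to split any torsion point into its $P_v$-primary and prime-to-$P_v$ parts (where $P_v \in \Fq[t]$ is the monic irreducible lying under $v$), show that only $P_v$-primary torsion can have $|x|_v < 1$, and then use a Newton-polygon analysis to control how close to $1$ the values $|x|_v$ can be for $P_v$-primary torsion of large order.

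First, good reduction at $v$ implies $\cJ_v$ is the closed unit disc, so every torsion point satisfies $|x|_v \le 1$. For any nonzero $Q \in \Fq[t]$ with $\gcd(Q, P_v) = 1$, the polynomial $\Phi_Q(z)/z$ has constant term $Q(t)$ (a $v$-adic unit), $v$-integral intermediate coefficients, and leading coefficient in $\Fq^{*}$; its Newton polygon is the flat segment at height $0$, so every nonzero root of $\Phi_Q$ has $|z|_v = 1$. Given a torsion point $x$ with annihilator $P_v^k R$, decompose $x = x_P + x_R$ through $\Phi[P_v^k R] = \Phi[P_v^k] \oplus \Phi[R]$. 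If $|x|_v < 1$ then $x$ reduces to $0$ modulo $v$; by the analogous direct-sum decomposition for the reduced Drinfeld module, both summands must then reduce to $0$, and the previous step forces $x_R = 0$. Hence every torsion point with $|x|_v < 1$ is $P_v$-primary.

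It remains to bound the valuations of $P_v$-primary torsion. The polynomial $\Phi_{P_v^k}(z)/z$ has linear coefficient $P_v(t)^k$ of positive valuation $k \cdot v(P_v)$, $v$-integral intermediate coefficients, and a unit leading coefficient; its Newton polygon descends from $(0,\, k \cdot v(P_v))$ down to $(q^{rk \deg P_v} - 1,\, 0)$, and the negative-slope portion encodes precisely the formal $P_v^k$-torsion. An inductive Newton-polygon computation using $\Phi_{P_v^k} = \Phi_{P_v} \circ \Phi_{P_v^{k-1}}$ together with good reduction shows that newly appearing torsion at level $k$ has $v$-valuation decaying geometrically in $k$. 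Thus, for any fixed $s \in (0, 1)$, torsion satisfying $|x|_v < s$ can only come from levels below an $s$-dependent threshold, yielding a finite set. The main obstacle is precisely this quantitative control on new-torsion valuations; the cleanest route is the inductive Newton-polygon calculation, using that modulo $v$ the reduction of $\Phi_{P_v}$ has its lowest-degree nonzero monomial of degree $q^{h \deg P_v}$ for some formal-group height $h \ge 1$, which forces each successive breakpoint of the polygon to shift by a factor of $q^{h \deg P_v}$.
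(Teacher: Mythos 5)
Your proof is correct and follows essentially the same route as the paper's: both arguments first show that a torsion point in the open unit $v$-disc must be $P_v$-primary (the paper by applying $\Phi_{P_v^n}$ and noting that a root of $\Phi_Q$ with $Q$ prime to $P_v$ and $|x|_v<1$ must vanish, you by the CRT decomposition plus the flat Newton polygon of $\Phi_Q(z)/z$), and then bound the $P_v$-power level of any torsion point with $|x|_v<s$. The paper packages the second step as a direct iteration of the ultrametric inequality $|\Phi_{P_v}(z)|_v\le\max\{|P_v z|_v,\,|z|_v^q\}<|z|_v$ combined with the lower bound $|y|_v\ge |P_v|_v^{1/(q-1)}$ for nonzero roots $y$ of $\Phi_{P_v}$, which is precisely the content of your inductive Newton-polygon computation of the slopes of $\Phi_{P_v^k}$.
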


\begin{proof}
Let $P\in \Fq[t]$ be the unique irreducible monic polynomial such that $|P|_v<1$, i.e. the place $v$ lies above the place coresponding to the polynomial $P$ in $\Fq(t)$. 

We first observe that if $|x|_v<1$, then for each $a\in\Fq(t)$ we have $|\Phi_a(x)|_v<s$ because each coefficient of  $\Phi_a$ is integral at $v$.
 
Secondly, we claim that if $x\in\Phi_{\tor}$ such that $|x|_v<1$, then there exists $n\in\N$ such that $\Phi_{P(t)^n}(x)=0$. Indeed, using our first observation it suffices to prove that if $|x|_v<1$ and $\Phi_{Q(t)}(x)=0$, where $Q\in\Fq[t]$ is relatively prime with $P(t)$, then $x=0$. Because $|x|_v<1$ and each coefficient of $\Phi_{Q(t)}$ is integral at $v$ while $|Q(t)|_v=1$, we conclude that $|\Phi_{Q(t)}(x)|_v=|Q(t)x|_v=|x|_v$. Hence, indeed $x=0$ as claimed.

So, if $0\ne x\in\Phi_{\tor}$ satisfies $|x|_v<s<1$ then there exists some $n\in\N$ such that $\Phi_{P(t)^n}(x)=0$. Assume $n$ is the smallest such positive integer, and let $y=\Phi_{P(t)^{n-1}}(x)$. Then $0\ne y$ and $\Phi_{P(t)}(y)=0$. Let $s_0:=|P(t)|_v^{1/(q-1)}<1$; so, if $|y|_v<s_0$, then $|\Phi_{P(t)}(y)|_v=|P(t)y|_v\ne 0$ which yields a contradiction. Therefore, $|y|_v\ge s_0$. On the other hand, if $0<|z|_v<1$ then
\begin{equation}
\label{how much we go down each time}
|\Phi_{P(t)}(z)|_v\le \max\{|P(t)z|_v, |z|_v^q\}<|z|_v
\end{equation}
since each coefficient of $\Phi_{P(t)}$ is integral at $v$. Because $|P(t)|_v<1$ and also $|z|_v< 1$, then $\left|P(t)z\right|_v<s_0$.  Thus, if $n>n_0:=1+ \log_q\left(\log_s(s_0)\right)$ inequality \eqref{how much we go down each time} yields that $|\Phi_{P(t)^{n-1}}(x)|_v=|y|_v<s_0$. This yields a contradiction with the fact that $y\ne 0$ but $\Phi_{P(t)}(y)=0$. So, in conclusion, if $x\in\Phi_{\tor}$ such that $|x|_v<s$ then $\Phi_{P(t)^{n_0}}(x)=0$, where $n_0$ is a positive integer depending only on $s$ and on $v$ (note that $s_0$ depends only on $v$). Thus there exist at most finitely many torsion points $x$ satisfying the inequality $|x|_v<s$. 
\end{proof}

Lemma~\ref{discreteness of the torsion} is a special case of \cite[Theorem 2.10]{Tate-Voloch}; however, our result is more precise since we assume each coefficient of $\Phi_t$ is integral at $v$. 
In particular, the following result is an immediate corollary.
\begin{cor}
\label{not too close to any number}
Let $z\in \Kbar$, let $v\in\Omega_K\setminus S_\infty$ and let $s$ be a real number such that $0<s<1$. Then there exist at most finitely many $x\in \Phi_{\tor}$ such that $|z-x|_v<s$.
\end{cor}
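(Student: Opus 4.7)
The plan is to reduce Corollary~\ref{not too close to any number} to Lemma~\ref{discreteness of the torsion} via the strong triangle inequality and the $\Fq[t]$-module structure on $\Phi_{\tor}$.

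First I would dispose of the trivial case: if no element $x\in\Phi_{\tor}$ satisfies $|z-x|_v<s$, then the conclusion holds vacuously. Otherwise, fix some $x_0\in\Phi_{\tor}$ with $|z-x_0|_v<s$. For any other $x\in\Phi_{\tor}$ with $|z-x|_v<s$, the ultrametric inequality at the non-archimedean place $v$ yields
\[
|x-x_0|_v \;\le\; \max\bigl\{|x-z|_v,\,|z-x_0|_v\bigr\} \;<\; s.
\]
Because $\Phi$ endows $\bG_a$ with an $\Fq[t]$-module structure for which $\Phi_{\tor}$ is a submodule, the difference $x-x_0$ again lies in $\Phi_{\tor}$. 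Thus $x-x_0$ belongs to the set
\[
T \;:=\; \{y\in\Phi_{\tor} \,:\, |y|_v<s\},
\]
which is finite by Lemma~\ref{discreteness of the torsion}. Consequently every candidate $x$ lies in the finite set $x_0+T$, and we conclude.

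There is essentially no obstacle here once Lemma~\ref{discreteness of the torsion} is available; the only subtlety is noting that $\Phi_{\tor}$ is closed under $\Fq[t]$-linear combinations (so that $x-x_0\in\Phi_{\tor}$), which is immediate from the fact that $\Phi$ is a ring homomorphism into $\End_K(\bG_a)$ and each $\Phi_Q$ is an additive polynomial. The use of the strong triangle inequality is crucial and is precisely why we need $v$ to be non-archimedean, which is guaranteed since $v\in\Omega_K\setminus S_\infty$ (in fact all places of $K$ are non-archimedean in the function field setting, but the restriction to finite places is what Lemma~\ref{discreteness of the torsion} requires in order to control torsion at $v$).
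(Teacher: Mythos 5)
Your argument is correct and is exactly the intended deduction: the paper simply asserts the corollary is "immediate" from Lemma~\ref{discreteness of the torsion}, and your combination of the ultrametric inequality with the fact that $\Phi_{\tor}$ is an additive group (so $x-x_0\in\Phi_{\tor}$ with $|x-x_0|_v<s$) is precisely the omitted step.
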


%
%
%
%
%
%
%


\section{Ih's Conjecture for Drinfeld modules}
\label{equi}

Assume $\Phi$ is in normal form and also it has everywhere good reduction away from $S_\infty$.  Also, using the notation from \eqref{definition N_v}, for each place $v\notin S_\infty$, if $|x|_v>1$ then $\hhat_{\Phi,v}(x)=\log|x|_v>0$ (since the leading coefficient of $\Phi_t$ is a $v$-adic unit). Let $\beta\in K$ be a nontorsion point. We'll prove Theorem~\ref{Ih's conjecture result} as a consequence of Theorem~\ref{alternative way of computing the canonical height}.

\begin{proof}[Proof of Theorem~\ref{Ih's conjecture result}.]
First, we enlarge $S$ so that it contains $S_\infty$; clearly enlarging $S$ can only increase the number of torsion points which are $S$-integral with respect to $\beta$. Then for all $v\notin S$ we know that for a torsion point $\gamma$ we have $|\gamma|_v\le 1$ since $\Phi$ has good reduction at $v$. Hence for each $v\notin S$, if $\gamma\in\Phi_{\tor}$ is $S$-integral with respect to $\beta$, then
$$|\beta-\gamma^\sigma|_v=\max\{|\beta|_v,1\},$$
for each $\sigma\in\Gal(K^{\sep}/K)$.

Assume there exist infinitely many torsion points $\gamma_n$ which are $S$-integral with respect to $\beta$. 
By Theorem~\ref{alternative way of computing the canonical height}, we know that
$$\hhat_\Phi(\beta)=\sum_{v\in\Omega_K}\lim_{n\to\infty}\frac{1}{[K(\gamma_n):K]}\sum_{\sigma:K(\gamma_n) \lra K^{\sep}}  \log|\beta-\gamma_n^{\sigma}|_v.$$
On the other hand, since $\gamma_n$ is $S$-integral with respect to $\beta$, we know that
$$\log|\beta-\gamma_n^{\sigma}|_v=\log^+|\beta|_v.$$
Hence the above outer sum consists of only finitely many nonzero terms and therefore we may reverse the order of the summation with the limit, and conclude that
$$\hhat_\Phi(\beta)=\lim_{n\to\infty}\sum_{v\in\Omega_K}\frac{1}{[K(\gamma_n):K]}\sum_{\sigma:K(\gamma_n) \lra K^{\sep}}  \log|\beta-\gamma_n^{\sigma}|_v=0,$$
by the product formula applied to each $\beta-\gamma_n$ (note that this element is nonzero since $\beta\notin\Phi_{\tor}$). Thus we obtain that $\hhat_\Phi(\beta)=0$, which contradicts the fact that $\beta\notin\Phi_{\tor}$. So, indeed there are at most finitely many torsion points which are $S$-integral with respect to $\beta$.
\end{proof}

We are left to proving Theorem~\ref{alternative way of computing the canonical height}. This will follow from the following result.
\begin{thm}
\label{alternative way of computing the local height}
Let $\beta\in K$, let $v\in M_K$, and let $\{\gamma_n\}\subset\Kbar$ be an infinite sequence of torsion points for the Drinfeld module $\Phi$. Then
$$\hhat_{\Phi,v}(\beta)=\lim_{n\to\infty} \frac{1}{[K(\gamma_n):K]}\cdot \sum_{\sigma\in\Gal(\Kbar/K)}\left|\beta-\gamma_n^\sigma\right|_v.$$
\end{thm}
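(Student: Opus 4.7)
The plan is to analyse each place $v$ separately by rewriting the average as the integral of $w\mapsto\log|\beta-w|_v$ against the normalised Galois orbit measure
$$\mu_n := \frac{1}{[K(\gamma_n):K]}\sum_{\sigma\in\Gal(\Kbar/K)} \delta_{\gamma_n^\sigma}.$$
Since the $\gamma_n$ are distinct and each $\Phi[Q]$ is finite, their orders in $\Fq[t]$ tend to infinity, and the Pink--R\"utsche openness result recalled in Section~\ref{note} then forces $[K(\gamma_n):K]\to\infty$.

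At a finite place $v$ (of good reduction by hypothesis) I would argue directly. If $|\beta|_v>1$ then every torsion $\gamma$ satisfies $|\gamma|_v\le 1$, so $|\beta-\gamma_n^\sigma|_v=|\beta|_v$ for each $\sigma$ and both sides of the claimed identity equal $\log|\beta|_v=\hhat_{\Phi,v}(\beta)$. If $|\beta|_v\le 1$ then every summand is $\le 0$, so $\limsup\le 0=\hhat_{\Phi,v}(\beta)$, and for the matching lower bound I would fix $s\in(0,1)$: Corollary~\ref{not too close to any number} shows $T_s:=\{\gamma\in\Phi_{\tor}:|\beta-\gamma|_v<s\}$ is finite, and since $\beta\notin\Phi_{\tor}$ we have $\delta_s:=\min_{\gamma\in T_s}|\beta-\gamma|_v>0$. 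At most $|T_s|$ of the $\gamma_n^\sigma$ can lie in $T_s$ while all others contribute at least $\log s$, so
$$\frac{1}{[K(\gamma_n):K]}\sum_\sigma\log|\beta-\gamma_n^\sigma|_v \;\ge\; \frac{|T_s|\log\delta_s}{[K(\gamma_n):K]}+\log s,$$
and letting $n\to\infty$ followed by $s\to 1^-$ yields $\liminf\ge 0$, as required.

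For $v\in S_\infty$ I would invoke the equidistribution theorem of \cite{equi} for Galois orbits of torsion points of a non-CM Drinfeld module: the measures $\mu_n$ converge weakly on the Berkovich analytic line $\Aberk{v}$ over $\C_v$ to the canonical $\Phi$-equilibrium measure $\mu_{\Phi,v}$ supported on the filled Julia set $\cJ_v$. Standard potential theory identifies the log-potential $\int\log|\beta-w|_v\,d\mu_{\Phi,v}(w)$ with Ingram's local height $\lambda_v(\beta)=G_v(\beta,0)$, which agrees with $\hhat_{\Phi,v}(\beta)$ on the affine line for a normalised Drinfeld module. Testing weak convergence $\mu_n\to\mu_{\Phi,v}$ against $w\mapsto\log|\beta-w|_v$ would therefore complete the proof.

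The main obstacle is the logarithmic singularity of $\log|\beta-w|_v$ at $w=\beta$, which prevents a direct application of weak convergence. I would handle this by the standard truncation: for $M>0$ set $\phi_M(w):=\max\{\log|\beta-w|_v,-M\}$, which is continuous and bounded on $\Aberk{v}$. Weak convergence gives $\int\phi_M\,d\mu_n\to\int\phi_M\,d\mu_{\Phi,v}$, while monotone convergence yields $\int\phi_M\,d\mu_{\Phi,v}\to\hhat_{\Phi,v}(\beta)$ as $M\to\infty$. The genuinely delicate remaining step is a uniform-in-$n$ bound on the truncation error, equivalently, non-concentration of the Galois conjugates of $\gamma_n$ near $\beta$. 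I would obtain it by rewriting $\prod_\sigma(\beta-\gamma_n^\sigma)=F_n(\beta)$ with $F_n\in K[z]$ the minimal polynomial of $\gamma_n$, using the divisibility $F_n\mid\Phi_{Q_n}$ in $K[z]$ (with $Q_n$ a generator of the $\Fq[t]$-annihilator of $\gamma_n$), and controlling $\log|\Phi_{Q_n}(\beta)|_v$ via the defining limit of the local canonical height, namely $\log|\Phi_{Q_n}(\beta)|_v\sim q^{r\deg Q_n}\hhat_{\Phi,v}(\beta)$ when the latter is positive (with an analogous but sharper estimate otherwise). Combining these ingredients yields an a priori lower bound for the averaged sum that rules out concentration and gives the theorem.
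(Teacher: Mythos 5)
Your treatment of the finite places is essentially the paper's own argument and is correct: the case $|\beta|_v>1$ is immediate, and for $|\beta|_v\le 1$ the lower bound via Corollary~\ref{not too close to any number} together with $[K(\gamma_n):K]\to\infty$ is exactly what the paper does (the paper handles $|\beta|_v\ne 1$ by quoting Baker--Hsia equidistribution instead, so your direct argument is, if anything, cleaner; note only that your $\delta_s>0$ needs $\beta\notin\Phi_{\tor}$, which is not in the hypotheses but is harmless since $\beta\in K$ can coincide with at most one $\gamma_n$). The genuine gap is at the infinite places when $\beta$ lies in the Julia set $J_v$, i.e.\ when $\hhat_{\Phi,v}(\beta)=0$, which is precisely the hard case. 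You propose to control the truncation error through a lower bound on $\log|\Phi_{Q_n}(\beta)|_v$ obtained ``via the defining limit of the local canonical height.'' But that limit is $\lim_n \log^+|\Phi_{Q_n}(\beta)|_v/q^{r\deg Q_n}=\hhat_{\Phi,v}(\beta)=0$, and because of the $\log^+$ it only yields an \emph{upper} bound $|\Phi_{Q_n}(\beta)|_v\le e^{o(q^{r\deg Q_n})}$; it says nothing from below about $|\Phi_{Q_n}(\beta)|_v=\prod_{z\in\Phi[Q_n]}|\beta-z|_v$, which a priori could be super-exponentially small if a single torsion point is extremely close to $\beta$. Ruling that out is not a height-theoretic statement but a Diophantine one: the paper invokes Bosser's lower bound for linear forms in Drinfeld logarithms (Fact~\ref{Bosser}) to get $\log|\beta-\gamma|_v\ge C_0+C_1 d\log d$ for $\gamma\in\Phi[Q]$ with $\deg Q=d$. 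Your parenthetical ``an analogous but sharper estimate otherwise'' is exactly the missing deep input, and without it the $\liminf$ half of your argument fails.

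Moreover, even granting such a pointwise bound, weak convergence of $\mu_n$ to $\mu_v$ does not control $\int h\,d\mu_n$ for the unbounded part $h$ of $\log|\beta-\cdot|_v$: one must show that not too many conjugates of $\gamma_n$ concentrate in each thin annulus $\{(i-1)\epsilon/D_n\le|z-\beta|_v\le i\epsilon/D_n\}$. The paper supplies this via a \emph{quantitative} equidistribution statement with a power-saving error term $O_\delta([K(\gamma):K]^{-\delta})$ uniform over small balls (Theorem~\ref{strong equidistribution} and Corollary~\ref{corollary strong equidistribution}), obtained by redoing the counting in \cite{equi} with the open set no longer fixed. These two ingredients --- Bosser's transcendence bound and the strong equidistribution with explicit error --- are the substance of the proof of the $v\in S_\infty$, $\beta\in J_v$ case; the parts you describe in detail (identifying $\int_{J_v}\log|\beta-z|\,d\mu_v$ with $\hhat_{\Phi,v}(\beta)$, the truncation, the finite places) are comparatively routine.
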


We'll prove Theorem~\ref{alternative way of computing the local height} by analyzing two cases depending on whether $v\in S_\infty$ or not.

\begin{prop}
\label{notin S infinity}
Theorem~\ref{alternative way of computing the local height} holds if $v\notin S_{\infty}$.
\end{prop}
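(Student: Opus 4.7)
The plan is to split the argument into two cases according to whether $|\beta|_v > 1$ or $|\beta|_v \leq 1$, exploiting that $\Phi$ is in normal form with good reduction at $v$. I would first record two preliminary facts: (a) because $\Phi_t$ is monic with $v$-integral coefficients, for every nonzero $Q \in \Fq[t]$ the polynomial $\Phi_Q$ is monic and $v$-integral as well, so every torsion point and every Galois conjugate thereof satisfies $|\gamma|_v \leq 1$ (otherwise the leading term of $\Phi_Q(\gamma)$ would dominate); (b) $[K(\gamma_n):K] \to \infty$ along the sequence, since for each $d$ the set of torsion points annihilated by some $Q$ of degree $\leq d$ is contained in $\Phi[\lcm_{\deg Q \leq d} Q]$ and hence is finite, so the minimal annihilator degrees of the $\gamma_n$ must diverge, which combined with the Pink--R\"{u}tsche lower bound on Galois orbits forces the extension degrees to grow.

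Case 1, $|\beta|_v > 1$, is immediate: the ultrametric inequality yields $|\beta - \gamma_n^\sigma|_v = |\beta|_v$ for every conjugate (since $|\gamma_n^\sigma|_v \leq 1 < |\beta|_v$), so every summand equals $\log|\beta|_v$, which at a good-reduction place is exactly $\hhat_{\Phi,v}(\beta)$. The average equals $\hhat_{\Phi,v}(\beta)$ identically in $n$.

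Case 2, $|\beta|_v \leq 1$ (so $\hhat_{\Phi,v}(\beta) = 0$), is the substantive case. Each summand is now $\leq 0$ because $|\beta - \gamma_n^\sigma|_v \leq 1$, so the upper bound on the average is automatic. The key tool for the matching lower bound is Corollary~\ref{not too close to any number}: for any $s \in (0,1)$, the set $T_s := \{x \in \Phi_{\tor} : |\beta - x|_v < s\}$ is finite, and after discarding the (at most one) element $x = \beta$ we get that $C(s) := \sum_{x \in T_s,\, x \neq \beta} |\log|\beta - x|_v|$ is a finite real number depending only on $s, \beta, v$. Given $\varepsilon > 0$, I would pick $s \in (0,1)$ with $|\log s| < \varepsilon/2$ and split the Galois conjugates of $\gamma_n$ (excluding the at most one conjugate equal to $\beta$, if any) into \emph{far} ones with $|\beta - \gamma_n^\sigma|_v \geq s$, each contributing absolute value at most $|\log s|$, and \emph{close} ones, which form a subset of $T_s \setminus \{\beta\}$, so their total absolute contribution is at most $C(s)$. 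This yields
\[
\left|\frac{1}{[K(\gamma_n):K]}\sum_{\sigma}\log|\beta - \gamma_n^\sigma|_v\right| \;\leq\; |\log s| + \frac{C(s)}{[K(\gamma_n):K]} \;<\; \varepsilon
\]
once $n$ is large enough, which completes the case.

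The main obstacle is controlling the close conjugates in Case 2: their individual logarithmic contributions are unbounded below, and a priori nothing prevents a growing number of them from clustering near $\beta$. The decisive leverage is the $v$-adic local discreteness of $\Phi_{\tor}$ around $\beta$ provided by Corollary~\ref{not too close to any number} (itself a refinement of the Tate--Voloch theorem made possible by the fact that $\Phi$ is in normal form with good reduction at $v$), which caps both the number and the total logarithmic contribution of close conjugates by a constant independent of $n$, so that the growing denominator $[K(\gamma_n):K]$ absorbs them in the limit.
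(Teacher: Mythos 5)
Your proof is correct, and it takes a genuinely more elementary route than the paper's in one of the two cases. The paper splits according to whether $\beta$ lies in the Julia set $J_v$ (the unit circle $|z|_v=1$) or not: for $\beta\in J_v$ it argues essentially as you do in your Case 2, using Corollary~\ref{not too close to any number} to push the average from below toward $0$; but for $\beta\notin J_v$ it invokes the Baker--Hsia equidistribution theorem to identify the limit with $\int_{J_v}\log|\beta-z|\,d\mu_v(z)$ and then evaluates that potential. You avoid equidistribution entirely: when $|\beta|_v>1$ the ultrametric inequality gives $|\beta-\gamma_n^\sigma|_v=|\beta|_v$ term by term, and when $|\beta|_v<1$ your discreteness argument applies verbatim, so the whole finite-place case rests only on Corollary~\ref{not too close to any number} and the good-reduction normalization $\hhat_{\Phi,v}(x)=\log^+|x|_v$. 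The price you pay is the need for $[K(\gamma_n):K]\to\infty$ (to kill the $C(s)$ term), which you correctly supply via Pink--R\"utsche; note that even this can be avoided, as the paper does, by observing that the finitely many torsion points within distance $s$ of $\beta$ lie in a fixed finite set of Galois orbits, so for all large $n$ \emph{no} conjugate of the distinct $\gamma_n$ is close and the exceptional sum is empty rather than merely $O(1)$. Two trivial touch-ups: $\Phi_Q$ is not literally monic for general $Q$ (its leading coefficient is the top coefficient of $Q$, a nonzero constant, hence still a $v$-adic unit, which is all you use), and if some $\gamma_n$ equals $\beta$ that single index should simply be discarded from the sequence, as it contributes $-\infty$ for one $n$ only and does not affect the limit.
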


\begin{proof}
Firstly, since $v\notin S_\infty$, then $v$ is a place of good reduction for $\Phi$ and then $\cJ_v$ consists of the unit disk. We denote by $J_v$ the Julia set, which is the boundary of $\cJ_v$. Note that since $\cJ_v$ is closed, then $J_v\subset \cJ_v$; finally, because $v$ is a place of good reduction and so, $\cJ_v$ is the unit disk, then $J_v$ is its boundary and thus consists of all $z\in\C_v$ such that $|z|_v=1$.

There are two cases: either $\beta\in J_v$, or not. 

{\bf Case 1.} Assume $\beta\in J_v$.

Then $\hhat_{\Phi,v}(\beta)=0$, since in particular $\beta\in\cJ_v$. If for each torsion point $\gamma$ we have that $|\beta -\gamma|_v=1$, then clearly
$$\lim_{n\to\infty}  \frac{1}{[K(\gamma_n):K]}\cdot \sum_{\sigma\in\Gal(\Kbar/K)}\left|\beta-\gamma_n^\sigma\right|_v=0=\hhat_{\Phi,v}(\beta).$$
Now, if there exists some torsion point $\gamma$ such that $|\beta-\gamma|_v<1$, let $s$ be any real number satisfying 
$$|\beta-\gamma|_v<s<1.$$ 
By Lemma~\ref{not too close to any number} we conclude that there exist finitely many torsion points $\gamma '$ such that $|\beta-\gamma '|_v<s$. In particular, for all $n$ sufficiently large, and for all $\sigma\in\Gal(\Kbar/K)$, we have  $|\beta-\gamma_n^\sigma|_v\ge s$ and thus
$$\frac{1}{[K(\gamma_n):K]}\cdot \sum_{\sigma} \log|\beta-\gamma_n^\sigma|_v\ge \log(s)$$
and so, letting $s\to 1$ we obtain
$$\lim_{n\to\infty} \frac{1}{[K(\gamma_n):K]}\cdot \sum_{\sigma} \log|\beta-\gamma_n^\sigma|_v\ge 0.$$
On the other hand, since each torsion point $\gamma '$ is in $\cJ_v$, we have $|\gamma '|_v\le 1$. Because $|\beta|_v=1$ (because $\beta\in J_v$) we conclude that $|\beta-\gamma '|_v\le 1$ for all $\gamma '\in\Phi_{\tor}$. Hence 
$$\lim_{n\to\infty} \frac{1}{[K(\gamma_n):K]}\cdot \sum_{\sigma:K(\gamma_n)\lra \Kbar} \log|\beta-\gamma_n^\sigma|_v\le 0$$
and therefore, in conclusion
$$\lim_{n\to\infty} \frac{1}{[K(\gamma_n):K]}\cdot \sum_{\sigma\in\Gal(\Kbar/K)} \log|\beta-\gamma_n^\sigma|_v= 0=\hhat_{\Phi,v}(\beta).$$

{\bf Case 2.} Assume $\beta\notin J_v$.

In this case, the function $z\mapsto \log|\beta-z|_v$ is continuous on $J_v$ and the equidistribution result of Baker-Hsia \cite[Proposition 4.5]{Baker-Hsia} yields that 
$$\lim_{n\to\infty} \frac{1}{[K(\gamma_n):K]}\cdot \sum_{\sigma\in\Gal(\Kbar/K)} \log|\beta-\gamma_n^\sigma|_v = \int_{J_v} \log|z-x| d\mu_v(x),$$
where $\mu_v$ is the invariant measure on the Julia set. 
So, if $|z|_v>1$, then the above limit equals $\log|z|_v$, while if $|z|_v<1$, then the above limit equals $0$. In both cases, the limit is the local canonical height of $z$ at $v$.
\end{proof}

\begin{remark}
\label{why we need the hypotesis}
The method of proof for Proposition~\ref{notin S infinity} reveals the necessity for our hypothesis from Theorem~\ref{Ih's conjecture result} that $\Phi$ has good reduction at all finite places. Indeed, this allows us to conclude that $\cJ_v$ is the unit disk in $\C_v$ and moreover that for each $z\in C_v$ we have $\hhat_{\Phi,v}(z)=\log^+|z|_v$. It is likely that Proposition~\ref{notin S infinity} holds without the above hypothesis but one would need a different approach.
\end{remark}

Assume now that $v\in S_\infty$. In this case the setup is from our paper \cite{equi}. As shown by Theorem $4.6.9$ of \cite{Goss}, there exists an $\Fq[t]$-lattice $\Lambda_v\subset\C_v$ associated to the generic characteristic Drinfeld module $\Phi$; $\C_v$ is the completion of $\bar{K_v}$ which is a complete, algebraically closed field. Let $\e$ be the exponential function defined in $4.2.3$ of \cite{Goss} which gives a continuous (in the $v$-adic topology) isomorphism $$\e:\C_v/\Lambda\rightarrow\C_v.$$
The torsion submodule of $\Phi$ in $\C_v$ is isomorphic naturally through $\e^{-1}$ to 
$$\left(\Fq(t)\tensor_{\Fq[t]}\Lambda_v\right)/\Lambda_v.$$ 
Since each torsion point is repelling, we conclude that the corresponding Julia set $J_v$ is the closure of the set of all torsion points. We also note that the completion of $\Fq(t)$ with respect to the restriction of $v$ on $\Fq(t)$ is $\Fq\left(\left(\frac{1}{t}\right)\right)$.

Then the restriction of $\e$ on $\left(\Fq\left(\left(\frac{1}{t}\right)\right)\tensor_{\Fq[t]}\Lambda_v\right)/\Lambda_v$ gives an isomorphism between $\left(\Fq\left(\left(\frac{1}{t}\right)\right)\tensor_{\Fq[t]}\Lambda_v\right)/\Lambda_v$ and $J_v$.

Let $r$ be the rank of $\Lambda_v$ which is the same as the rank of the Drinfeld module $\Phi$. Then $\left(\Fq(t)\tensor_{\Fq[t]}\Lambda_v\right)/\Lambda_v\isomto (\Fq(t)/\Fq[t])^r$. Let $\omega_1,\dots,\omega_r$ be a fixed $\Fq[t]$-basis of $\Lambda_v$. Furthermore, we may assume the $\omega_i$'s is a basis of \emph{successive minima} as defined by Taguchi \cite{Taguchi}, i.e., for each $P_1,\dots,P_r\in \Fq[t]$ we have
$$|P_1(t)\omega_1+\cdots +P_r(t)\omega_r|_v = \max_{i=1}^r |P_i(t)\omega_i|_v.$$
Using Proposition $4.6.3$ of \cite{Goss}, $\left(\Fq\left(\left(\frac{1}{t}\right)\right)\tensor_{\Fq[t]}\Lambda_v\right)/\Lambda_v$ is isomorphic to $\left(\mathbb{F}_q((\frac{1}{t}))/\mathbb{F}_q[t]\right)^r$. Then we have the isomorphism $$\E:\left(\mathbb{F}_q\left(\left(\frac{1}{t}\right)\right)/\mathbb{F}_q[t]\right)^r\rightarrow J_v\text{ given by}$$
$$\E(\gamma_1,\dots,\gamma_r):=\e(\gamma_1\omega_1+\dots+\gamma_r\omega_r)\text{, 
for each $\gamma_1,\dots,\gamma_r\in\mathbb{F}_q\left(\left(\frac{1}{t}\right)\right)/\mathbb{F}_q[t]$.}$$
We construct the following  group isomorphism $$\tau:\mathbb{F}_q\left(\left(\frac{1}{t}\right)\right)/\mathbb{F}_q[t] \rightarrow\frac{1}{t}\cdot\mathbb{F}_q\left(\left(\frac{1}{t}\right)\right)\text{, given by}$$
\begin{equation}
\label{E:sigma}
\tau\left(\sum_{i\ge -n}\alpha_i\left(\frac{1}{t}\right)^i\right)=\sum_{i\ge 1}\alpha_i\left(\frac{1}{t}\right)^i,
\end{equation}
for every natural number $n$ and for every $\sum_{i\ge -n}\alpha_i\left(\frac{1}{t}\right)^i\in\mathbb{F}_q\left(\left(\frac{1}{t}\right)\right)$ (obviously, $\tau$ vanishes on $\mathbb{F}_q[t]$). The group $\frac{1}{t}\cdot\mathbb{F}_q[[\frac{1}{t}]]$ is a topological group with respect to the restriction of $v$ on $\frac{1}{t}\cdot\mathbb{F}_q[[\frac{1}{t}]]$. Hence, the isomorphism $\tau^{-1}$ induces a topological group structure on $\mathbb{F}_q\left(\left(\frac{1}{t}\right)\right)/\mathbb{F}_q[t]$. Therefore, $\tau$ becomes a continuous isomorphism of topological groups.
We endow $\left(\mathbb{F}_q\left(\left(\frac{1}{t}\right)\right)/\mathbb{F}_q[t]\right)^r$ with the corresponding product topology. The isomorphism $\tau$ extends diagonally to another continuous isomorphism, which we also call $$\tau:\left(\mathbb{F}_q\left(\left(\frac{1}{t}\right)\right)/\mathbb{F}_q[t]\right)^r \rightarrow\left(\frac{1}{t}\mathbb{F}_q\left[\left[\frac{1}{t}\right]\right]\right)^r=:G.$$ Moreover, using that $\e$ is a continuous morphism, we conclude 
\begin{equation}
\label{E:continuous isomorphism}
\E\tau^{-1}:G\rightarrow J_v\text{ is a continuous isomorphism.}
\end{equation} 
Since $\omega_1,\dots,\omega_r$ is a basis of $\Lambda$ formed by successive minima,  for each $a_1,\dots,a_r\in  \Fq[[1/t]]$ we have
$$|\sum_{i=1}^r a_i(1/t)\omega_i |_v = \max_{i=1}^r |a_i(1/t)\omega_i|_v.$$
Indeed, without loss of generality, we assume $a_i\ne 0$ for $i=1,\dots, s$ and $a_i=0$ for $i>s$ (for some $s\le r$). Let $N\in\N$ such that $|1/t^N|_v = \min_{i=1}^s |a_i(1/t)|_v$. Then there exist nonzero $P_1,\dots, P_s\in\Fq[t]$ of degree at most equal to  $N$  such that for each $i=1,\dots, s$ we have
$$a_i(1/t) = P_i(t)/t^N + b_i(1/t),$$
where each $b_i(1/t)\in 1/t^{N+1}\cdot \Fq[[1/t]]$. Then
$$\left|\sum_{i=1}^s b_i(1/t)\omega_i\right|_v \le |1/t^{N+1}|_v\cdot \max_{i=1}^s |\omega_i|_v<|1/t^N|_v\cdot \max_{i=1}^s |\omega_i|_v.$$
On the other hand,
$$\left|\frac{\sum_{i=1}^s P_i(t)\omega_i}{t^N}\right| = \frac{\max_{i=1}^s |P_i(t)\omega_i|_v}{|t^N|_v} \ge \frac{\max_{i=1}^s |\omega_i|_v}{|t^N|_v},$$
since $|P_i(t)|_v\ge 1$ because each $P_i$ is nonzero. Therefore,
$$\left|\sum_{i=1}^r a_i(1/t)\omega_i\right|_v = \frac{\max_{i=1}^s |P_i(t)\omega_i|_v}{|t^N|_v} = \max_{i=1}^r |a_i(1/t)\omega_i|_v.$$

\begin{notation}
\label{N:D2}
Let $\nu_v$ be the Haar measure on $G$, normalized so that its total mass is $1$. Let $\mu_v:=\left(\E\sigma^{-1}\right)_{*}\nu_v$ be the induced measure on $J_v$ (i.e. $\mu_v(V):=\nu_v\left(\sigma\E^{-1}(V)\right)$ for every measurable $V\subset J_v$). 
\end{notation}
Because $\nu_v$ is a probability measure, then $\mu_v$ is also a probability measure. Because $\nu_v$ is a Haar measure on $G$ and $\E\sigma^{-1}$ is a group ismorphism, then $\mu_v$ is a Haar measure on $J_v$.

\begin{defi}
\label{D:Dirac measure}
Given $x\in K^{\sep}$, we define a probability measure $\dirac_{x}$ on $\C_v$ by
$$\dirac_x=\frac{1}{[K(x):K]}\sum_{\sigma\in\Gal(\Kbar/K)}\delta_{x^\sigma},$$
where $\delta_y$ is the Dirac measure on $\C_v$ supported on $\{y\}$.
\end{defi}

Before we can state the equidistribution result from \cite[Theorem 2.7]{equi} (see our Theorem~\ref{T:equi}), we need to define the concept of weak convergence for a sequence of probability measures on a metric space.
\begin{defi}
\label{D:weak convergence}
A sequence $\{\lambda_k\}$ of probability measures on a metric space $S$ weakly converges to $\lambda$ if for any bounded continuous function $f:S\rightarrow\mathbb{R}$, $\int_S f d\lambda_k\rightarrow\int_S f d\lambda$ as $k\rightarrow\infty$. In this case we use the notation $\lambda_k\goes\lambda$.
\end{defi}

\begin{thm}
\label{T:equi}
Let $\Phi:A\rightarrow K\{\tau\}$ be a Drinfeld module of generic characteristic such that $\End_{K^{\sep}}(\Phi)\isomto \Fq[t]$. Let $\{x_k\}$ be a sequence of distinct torsion points in $\Phi$. Then $\dirac_{x_k}\goes\mu_v$.
\end{thm}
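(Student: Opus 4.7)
The plan is to use a Fourier-analytic equidistribution argument, analogous to Weyl's criterion. Since $J_v$ is a compact abelian topological group via the isomorphism $\E\tau^{-1}:G\isomto J_v$ and $\mu_v$ is the corresponding Haar measure, Pontryagin duality reduces the weak convergence $\dirac_{x_k}\goes\mu_v$ to showing that $\int_{J_v}\chi\,d\dirac_{x_k}\to 0$ for every nontrivial continuous character $\chi:J_v\to\C^*$. Indeed, characters separate points of the compact abelian group $J_v$, so by Stone--Weierstrass their $\C$-span is dense in $C(J_v,\C)$, and integration against Haar measure annihilates every nontrivial character.

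So I would fix a nontrivial $\chi$ and aim to prove $\frac{1}{[K(x_k):K]}\sum_\sigma \chi(x_k^\sigma)\to 0$. By continuity of $\chi$ on the compact group $J_v$, $\chi$ is trivial on some open subgroup; translated through $\E\tau^{-1}$ and the identification $\Phi_{\tor}\cong(\Fq(t)/\Fq[t])^r$ sitting densely inside $J_v$, this means $\chi$ factors through a finite quotient corresponding to torsion of some level $\Phi[Q]$, for a nonzero $Q\in\Fq[t]$ depending only on $\chi$. Equivalently, $\chi$ is determined by its values on cosets of a finite-index subgroup whose image inside the torsion is (a translate of) $\Phi[Q]$. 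The next step is to invoke the Pink--R\"utsche openness theorem to control the Galois action on these cosets: since $\rho(\Gal(K^{\sep}/K))$ is open in $\GL_r(\T)$, the Galois orbit of a torsion point of order $R$ is, up to a uniformly bounded index, a full $\GL_r(\Fq[t]/R)$-orbit. For $x_k$ of sufficiently large order (depending only on $\chi$), the projection of its Galois orbit to $\Phi[Q]$ therefore fills complete cosets of a subgroup on which $\chi$ is nontrivial, so $\sum_\sigma \chi(x_k^\sigma)=0$. Since $\Phi[R]$ is finite for each fixed $R$ and the $x_k$ are distinct, only finitely many $x_k$ fall into the excluded small-order bucket, and the trivial bound $|\sum_\sigma \chi(x_k^\sigma)|\le [K(x_k):K]$ together with the Pink--R\"utsche lower bound $[K(x_k):K]\gg q^{r\deg Q_k}$ on their degree makes their normalized contribution negligible.

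The main obstacle I anticipate is making precise the interaction between the topology on $J_v$ and the discrete torsion structure \textemdash\ in particular, identifying for each continuous character the exact finite-level torsion module through which it factors. Producing an explicit dictionary, for example realizing Pontryagin self-duality of $\Fq((1/t))$ via the residue pairing $(f,g)\mapsto\mathrm{Res}_\infty(fg\,dt)$ and then transporting it to each factor of $G$, would pin this down cleanly; the exponential map $\e$ is only a topological isomorphism and not an algebraic one, so matching character levels with torsion levels requires some care in the identification. Once the dictionary between characters of $J_v$ and finite torsion levels is set up, the Pink--R\"utsche input reduces the equidistribution statement to the standard averaging argument sketched above.
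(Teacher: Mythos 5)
The paper does not actually reprove this statement: Theorem~\ref{T:equi} is quoted from \cite[Theorem~2.7]{equi}, with Remark~\ref{important observation} noting that the Pink--R\"utsche adelic openness theorem \cite{Pink-Rutsche} removes the hypothesis that $K$ be transcendental over $\Fq(t)$. The cited proof, as sketched in the discussion surrounding Theorem~\ref{strong equidistribution}, works in ``physical space'': one verifies convergence on the basic open sets $U$ of $G\cong J_v$ (cosets of the open subgroups $\prod_i t^{-(n_i+1)}\Fq[[1/t]]$) by counting, via adelic openness together with an elementary count of principal parts of the fractions $P_i/Q$, how many conjugates of $x_k$ land in each coset. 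Your argument is the Pontryagin dual of this: the basic open sets are exactly the level sets of the continuous characters of the profinite group $J_v$, so showing the normalized character sums tend to $0$ is equivalent to showing each coset of each open subgroup receives its proportional share of conjugates. Both versions rest on the same two inputs (Pink--R\"utsche openness and the counting of torsion inside $G$); the Fourier formulation is cleaner to state, while the direct count is what is needed anyway for the quantitative version (Theorem~\ref{strong equidistribution}) used later in the paper. So this is essentially the same proof in dual clothing rather than a genuinely independent route.

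One step is overstated: the claim that $\sum_\sigma\chi(x_k^\sigma)=0$ exactly for $x_k$ of large order. The Galois orbit of $x_k$ is a single orbit of $\rho(\Gal(K^{\sep}/K))$, which merely \emph{contains} an open subgroup $\Gamma$ of $\GL_r(\T)$; it need not be the full set of primitive vectors of $\Phi[R_k]$, and even the orbit of $x_k$ under a principal congruence subgroup $1+M\cdot\mathrm{Mat}_r$ is a full coset of $M\cdot(\Fq[t]/(R_k))^r$ only up to the exceptional set where $1+Mh$ fails to be invertible. Moreover, even for the full $\GL_r(\Fq[t]/(R_k))$-orbit, the M\"obius sum over the divisors of $R_k$ of small degree (on which $\chi$ may be trivial) leaves a bounded but generally nonzero remainder. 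What is true, and what suffices, is that the character sum is $o([K(x_k):K])$ --- in fact $O_\chi([K(x_k):K]^{1-\delta})$ --- and establishing this for a single $\Gamma$-orbit is precisely the error analysis carried out in \cite{equi} (the estimates quoted here as (33), (39), (44) of that paper in the proof of Theorem~\ref{strong equidistribution}). With ``$=0$'' replaced by this quantitative bound, your reduction is correct.
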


\begin{remark}
\label{important observation}
Theorem~\ref{T:equi} is stated in \cite[Theorem 2.7]{equi} when $K$ is a transcendental extension of $\Fq(t)$ since the author needed to use the fact that the image of the Galois group is open in the ad\`elic Tate module for a Drinfeld module, and at that moment the result was only known under the assumption that $K$ is transdendental over $\Fq(t)$ (see \cite{Breuer-Pink}). However, since then R\"utsche and Pink \cite{Pink-Rutsche} removed the assumption on $K$, and thus Theorem~\ref{T:equi} holds in the above generality (see also \cite[Remarks 3.2]{equi}).
\end{remark}

Furthermore the proof from \cite{equi} yields that the points in $\Phi[Q]$ are equidistributed in the Julia set $J_v$ with respect to $d\mu_v$ as $\deg(Q)\to\infty$. This follows as the main result of \cite{equi} using the fact that $J_v$ is isomorphic (as a topological group) to $G$ and the points in $\Phi[Q]$ correspond in $G$ to all points of the form
$$\left(\frac{P_1}{Q},\dots,\frac{P_r}{Q}\right)$$
where the monic polynomials $P_i$ have degrees less than $\deg(Q)$. More precisely, for a generic open subset of $\left(\frac{1}{t}\cdot \Fq\left[\left[\frac{1}{t}\right]\right]\right)^r$ which is of the form (see \cite[page 847, equation (6)]{equi})
$$U:=\left(a_1\left(\frac{1}{t}\right),\cdots ,a_r\left(\frac{1}{t}\right)\right)+ \left(\frac{1}{t^{n_1+1}}\cdot \Fq\left[\left[\frac{1}{t}\right]\right],\cdots ,\frac{1}{t^{n_r+1}}\cdot \Fq\left[\left[\frac{1}{t}\right]\right]\right)$$
for some polynomials $a_i\in\frac{1}{t}\cdot \Fq\left[\left[\frac{1}{t}\right]\right]$ of degree at most $n_i$, 
we have to show (see \cite[page 847, equation (7)]{equi}) that the number of tuples
$$\left(\frac{P_1}{Q},\cdots, \frac{P_r}{Q}\right)\in U$$
where $\deg(P_i)<\deg(Q)=d$ is asymptotic to 
$$q^{dr-\sum_{i=1}^r n_i}\text{ as }d\to\infty.$$
As argued in \cite{equi}, it suffices to prove this claim when $r=1$, in which case the above statement reduces to show that (as $d\to\infty$) there are $q^{d-n_1}$ distinct polynomials $P_1$ of degree less than $d$ satisfying
$$\frac{P_1}{Q}-a_1\left(\frac{1}{t}\right)\in \frac{1}{t^{n_1+1}}\cdot \Fq\left[\left[\frac{1}{t}\right]\right].$$
This last statement follows at once since this last condition induces $n_1$ conditions on the $d$ coefficients of $P_1$.

Furthermore, a strong equidistribution result for torsion points is obtained in the proof of the main result from \cite{equi} (a similar result was proven in a more general context by Favre and Rivera-Letelier \cite{F-R-L}).
\begin{thm}
\label{strong equidistribution}
Given $\gamma\in\Phi_{\tor}$, and also given an open subset $\E\tau^{-1}(U)$ of $J_v$, where $U\subset G$ is defined as above:
$$U:=\left(a_1\left(\frac{1}{t}\right),\cdots ,a_r\left(\frac{1}{t}\right)\right)+ \left(\frac{1}{t^{n_1+1}}\cdot \Fq\left[\left[\frac{1}{t}\right]\right],\cdots ,\frac{1}{t^{n_r+1}}\cdot \Fq\left[\left[\frac{1}{t}\right]\right]\right),$$
we let
$$N(\gamma,U):=\{\sigma\in\Gal(K^{\sep}/K)\text{ : }\gamma^\sigma\in \E\tau^{-1}(U)\}.$$
Let $\delta$ be a real number in the interval $(0,1)$. Then for all $\gamma\in\Phi_{\tor}$ and for all open subsets $U$ as above, 
$$\frac{N(\gamma, U)}{[K(\gamma):K]}=\mu_v(U)+O_\delta\left([K(\gamma):K]^{-\delta}\right).$$
\end{thm}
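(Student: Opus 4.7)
The plan is to fix $Q \in \Fq[t]$ as the exact order of $\gamma$, identify $\Phi[Q]$ with $(\Fq[t]/(Q))^r$ via the basis $\omega_1,\ldots,\omega_r$ of successive minima, and perform Fourier analysis on this finite abelian group using the Pink--R\"utsche openness theorem as the key input. Set $d := \deg Q$; we may assume $d \ge \max_i n_i$, since otherwise $[K(\gamma):K]$ is bounded by a constant depending only on $\Phi$ and the $n_i$, and the claim holds trivially after adjusting $C_\delta$. Every element of $\Phi[Q]$ can be written as $\e(\sum_i (P_i/Q)\omega_i)$ with $\deg P_i < d$, and lies in $\E\tau^{-1}(U)$ precisely when, for each $i$, the first $n_i$ Laurent coefficients of $P_i/Q$ coincide with those of $a_i(1/t)$. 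These are $\sum_i n_i$ independent $\Fq$-linear conditions on the $rd$ coefficients of the $P_i$'s, so $V := \E\tau^{-1}(U)\cap\Phi[Q]$ is a coset of an $\Fq$-linear subspace $L \subseteq \Phi[Q]$ of codimension $\sum_i n_i$, giving $|V|/|\Phi[Q]| = q^{-\sum n_i} = \mu_v(U)$.

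Let $T \subseteq \Phi[Q]$ be the Galois orbit of $\gamma$, so $|T| = [K(\gamma):K]$, and write $\mathbf{1}_V$ for the indicator of $V$. Expanding $\mathbf{1}_V$ into additive characters of $\Phi[Q]$ and isolating the trivial character gives
\begin{equation*}
N(\gamma,U) = \sum_{w \in T}\mathbf{1}_V(w) = \mu_v(U)\,|T| + \frac{1}{|\Phi[Q]|}\sum_{\substack{\chi \in L^\perp\\ \chi \ne 1}} \widehat{\mathbf{1}_V}(\chi)\,\overline{S_T(\chi)},
\end{equation*}
where $S_T(\chi) := \sum_{w \in T}\chi(w)$, using that $\widehat{\mathbf{1}_V}$ is supported on the annihilator $L^\perp$ with $|\widehat{\mathbf{1}_V}(\chi)| = |V|$ there, and $|L^\perp| = q^{\sum n_i} = \mu_v(U)^{-1}$. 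The task is thus reduced to estimating $|S_T(\chi)|$ uniformly over nontrivial $\chi \in L^\perp$.

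For this I would invoke Pink--R\"utsche: the image of $\rho$ in \eqref{this is rho} contains an open subgroup $H \subseteq \GL_r(\T)$ of some index $M$ depending only on $\Phi$, and in particular $H$ contains the principal congruence subgroup modulo some fixed polynomial $Q_0 \in \Fq[t]$. Hence $T$ is a union of at most $M$ translates of the orbit $H_Q \cdot \bar{v}$, where $\bar{v} \in (\Fq[t]/(Q))^r$ corresponds to $\gamma$ and $H_Q \subseteq \GL_r(\Fq[t]/(Q))$ is the reduction of $H$. On the part of $Q$ coprime to $Q_0$, $H_Q$ surjects onto the corresponding $\GL_r$-component and this orbit is the full set of primitive vectors, on which additive character orthogonality yields $|S_T(\chi)| = O(1)$; a Chinese Remainder decomposition combined with the trivial bound on the $Q_0$-primary factor then produces $|S_T(\chi)| \le C_\delta |T|^{1-\delta}$ uniformly in $\gamma$, $Q$ and $\chi$, with $C_\delta$ depending only on $\Phi$ and $\delta$. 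Inserting this bound yields $|N(\gamma,U) - \mu_v(U)\,|T|| \le \mu_v(U) \cdot q^{\sum n_i}\cdot C_\delta |T|^{1-\delta} = C_\delta |T|^{1-\delta}$, and dividing through by $[K(\gamma):K]$ gives the claim.

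The main obstacle is the character-sum estimate $|S_T(\chi)| \ll_\delta |T|^{1-\delta}$ in the final step. When $Q$ is coprime to $Q_0$ the bound is essentially sharp via orthogonality, but when $Q$ has a substantial $Q_0$-primary factor the Galois image is merely of bounded index in $\GL_r(\Fq[t]/(Q))$, and one must track the Chinese Remainder contributions with care; the freedom to choose any $\delta \in (0,1)$ in the conclusion precisely absorbs the loss on the $Q_0$-primary part of $Q$.
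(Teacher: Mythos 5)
Your Fourier-analytic setup is a genuinely different route from the paper's: the paper does not pass through character sums at all, but instead redoes the direct count from \cite{equi} (counting tuples $\left(P_1/Q,\dots,P_r/Q\right)$ lying in $U$ \emph{and} in a prescribed Galois orbit) and bounds the resulting error by the number of divisors of the order $b$ of $\gamma$ divided by the lower bound $q^{(1-\epsilon)\deg b}$ for the orbit size coming from \cite{Pink-Rutsche}. Within your framework, the identification of $V=\E\tau^{-1}(U)\cap\Phi[Q]$ as a coset of an $\Fq$-subspace of codimension $\sum_i n_i$, and the reduction to bounding $S_T(\chi)$ for nontrivial $\chi\in L^\perp$, are both correct once $\deg Q$ is large relative to $\max_i n_i$.

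The gap is exactly where you place it, and it is not merely technical: the estimate $|S_T(\chi)|\le C_\delta|T|^{1-\delta}$ \emph{uniformly over nontrivial $\chi$} is false. Take $Q=P^n$ with $\deg P=1$ and $\chi$ nontrivial but trivial on $P\cdot\Phi[Q]$ (conductor $P$); since $T$ is, up to bounded index, the set of elements of exact order $Q$ (the unimodular vectors), inclusion--exclusion gives $|S_T(\chi)|\asymp q^{r(n-1)\deg P}\asymp |T|\cdot q^{-r}$, which is not $O_\delta\left(|T|^{1-\delta}\right)$ once $\delta$ is close to $1$. The same phenomenon defeats the intermediate claim that orthogonality gives $|S_T(\chi)|=O(1)$ on the primitive vectors of the part of $Q$ coprime to $Q_0$: that holds only for squarefree $Q$. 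A correct version of the argument must exploit that the characters you actually need --- those in $L^\perp$, together with their translates $\chi\circ g_j$ arising from the bounded-index decomposition of the Galois image --- are dual to elements $a$ with $\deg a_i<n_i\ll\deg Q$ and hence have conductor close to $Q$; only for such characters can the required cancellation be proved, and you never use this restriction. You would also need a separate treatment of the regime $\deg Q\lesssim\max_i n_i$: your opening reduction lets the constant depend on the $n_i$, which destroys the uniformity in $U$ that the theorem asserts and that (as the paper stresses) is the entire point of the statement. As written, the decisive estimate is both unproved and, in the generality in which you state it, wrong.
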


\begin{proof}
This is the Drinfeld module analogue of the strong equidistribution result from \cite[Proposition~2.4]{Ih}, and it is essentially proven in \cite{equi} when deriving formula (7), page 847. One simply needs to be more careful when estimating the error term in \cite[(39), page 854]{equi} since this time we don't fix the open set $U$. The differences are as follows. In \cite[(33), page 852]{equi}, one estimates the number of all polynomials $q_i'$ (for $i=1,\dots, r$) to be 
$$q^{\sum_{i=1}^r \deg(b)-\deg(b')-\deg(d)-n_i}+O(1),$$
where $O(1)$ is independent of all previsously defined quantities. Then  the error term in \cite[(39), page 854]{equi} is bounded above by the number of divisors of the polynomial $b\in\Fq[t]$ (which is the order of $\gamma$). Finally, noting that the number of divisors of $b$ is bounded above by $\deg(b)$, and that \cite[(44), page 855]{equi} is bounded below (see also \cite[Remarks~3.2, page 856]{equi}) by the number of polynomials relatively prime with $b$ and of degree less than $\deg(b)$, and this  number is larger than $q^{(1-\epsilon)\deg(b)}$, for any positive real number $\epsilon$, we obtain the conclusion of Proposition~\ref{strong equidistribution}. 
\end{proof}
The following result is a simple consequence of Theorem~\ref{strong equidistribution} since $\E\tau^{-1}$ induces a local isometry between $G$ and $U$, and therefore there exists a positive real number $r_v$ such that for all subsets $U$ of $J_v$ of diameter at most $r_v$ (i.e., for each $x,y\in V$ we have $\log|x-y|_v \le r_v$), $\E\tau^{-1}$ is a distance-preserving isomorphism between $U$ and $\E\tau^{-1}(U)$.
\begin{cor}
\label{corollary strong equidistribution}
Let $\delta \in (0,1)$ and let $\beta\in J_v$. Then for all open subsets $U\subset J_v$ containing $\beta$ and of diameter at most $r_v$, and for all $\gamma\in\Phi_{\tor}$ we have
$$\frac{\# \{\sigma\in\Gal(K^{\sep}/K) \text{ : }\gamma^{\sigma} \in U\}}{[K(\gamma):K]} = \mu_v(U) + O_\delta\left([K(\gamma):K]^{-\delta}\right).$$
\end{cor}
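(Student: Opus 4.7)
The plan is to lift the counting problem from $J_v$ to $G$ via the group isomorphism $\E\tau^{-1}$ and invoke Theorem~\ref{strong equidistribution} directly.

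First I would set $W := \tau\E^{-1}(U) \subset G$. Since $U$ has diameter at most $r_v$ and $\E\tau^{-1}$ is distance-preserving on sets of this diameter, $W$ is an open subset of $G$ of diameter at most $r_v$ containing $\tau\E^{-1}(\beta)$. By the definition of $N(\gamma, \cdot)$ used in Theorem~\ref{strong equidistribution}, the quantity $\#\{\sigma : \gamma^\sigma \in U\}$ equals $N(\gamma, W)$ exactly, so it suffices to estimate $N(\gamma, W)$.

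Next I would decompose $W$ as a finite disjoint union
$$W = \bigsqcup_{j=1}^{N_0} V_j$$
of basic open sets of the form
$$V_j = (a_{j,1}, \dots, a_{j,r}) + \left(\frac{1}{t^{n_{j,1}+1}} \Fq\left[\left[\frac{1}{t}\right]\right], \dots, \frac{1}{t^{n_{j,r}+1}} \Fq\left[\left[\frac{1}{t}\right]\right]\right).$$
Such a decomposition exists because $G$ carries an ultrametric product topology in which basic open balls of this shape are pairwise disjoint or nested; since $W$ has small diameter, finitely many maximal such balls suffice to partition it, and the integer $N_0$ depends on $U$ but not on $\gamma$. Applying Theorem~\ref{strong equidistribution} to each $V_j$, summing, and using additivity of the counting function and of the measure $\mu_v$, I would obtain
$$\frac{\#\{\sigma \in \Gal(K^{\sep}/K) : \gamma^\sigma \in U\}}{[K(\gamma):K]} = \sum_{j=1}^{N_0}\left(\mu_v(V_j) + O_\delta\!\left([K(\gamma):K]^{-\delta}\right)\right) = \mu_v(U) + O_\delta\!\left([K(\gamma):K]^{-\delta}\right),$$
where the uniform bound $N_0$ is absorbed into the big-O.

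The only technical step is the finite disjoint decomposition of $W$ into basic open balls of the required form, but this is routine given the ultrametric and compact structure of $G$; I do not expect any serious obstacle, which is why the corollary should be a direct consequence of Theorem~\ref{strong equidistribution}.
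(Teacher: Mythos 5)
Your route is the same as the paper's: the paper treats Corollary~\ref{corollary strong equidistribution} as immediate from the single observation that $\E\tau^{-1}$ is distance-preserving on subsets of diameter at most $r_v$, so that one transfers $U$ to $G$ and quotes Theorem~\ref{strong equidistribution}; you carry out exactly this transfer and then make the reduction to basic open sets explicit. Two points in the extra detail you supply deserve care. First, the claim that finitely many maximal basic balls partition $W$ is not true for an arbitrary open subset of small diameter: a punctured ball in $G$ is already an infinite disjoint union of basic balls, and small diameter does not help. Finiteness requires $W$ to be compact open, which does hold for the sets the corollary is actually applied to later (balls and the shells $J_v([(i-1)/D_n,i/D_n])$), but not for every open $U$ in the statement. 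Second, after summing over the $N_0$ pieces, your implied constant in $O_\delta\left([K(\gamma):K]^{-\delta}\right)$ is $N_0$ times the constant of Theorem~\ref{strong equidistribution} and hence depends on $U$; in the subsequent application $U$ runs over shells whose number of constituent basic balls grows with $n$, so uniformity in $U$ matters. This is repaired by noting that a ball $\{z:|z-\beta|_v\le s\}$ transfers under $\tau\E^{-1}$ to a single basic set of the required product form (because the metric on $G$ is $\max_i|x_i\omega_i|_v$ with respect to the successive-minima basis), so a shell is the difference of two basic sets and only two error terms are incurred rather than $N_0$. The paper itself elides both points, so your proposal is no less complete than the source, but you should restrict the decomposition to compact open $U$ and track how the constant depends on the number of pieces.
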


The following result follows from the powerful lower bound for linear forms in logarithms for Drinfeld modules established by Bosser \cite{Bosser}.
\begin{fact}
\label{Bosser}
Assume $\infty\in \Omega_K$ is an infinite place. Let $\beta\in K$ be a nontorsion point and let $\gamma\in \Phi[Q]$ where $Q\in \Fq[t]$ is a monic polynomial of degree $d$. Then there exist
(negative) constants $C_0$ and $C_1$ (depending only on $\Phi$ and
$\beta$) such that
$$
\log | \gamma-\beta|_{\infty} \geq C_0 + C_1 d\log d.
  $$
\end{fact}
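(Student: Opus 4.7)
The plan is to derive the inequality as an application of the lower bound for linear forms in Drinfeld logarithms proved by Bosser~\cite{Bosser}. At the infinite place $\infty$, the Drinfeld module $\Phi$ is uniformized by the exponential $\e\colon\C_\infty\to\C_\infty$ whose kernel is the rank-$r$ lattice $\Lambda_\infty=\bigoplus_{i=1}^r\Fq[t]\omega_i$, where $\omega_1,\dots,\omega_r$ is the basis of successive minima fixed in Section~\ref{equi}. Torsion points of $\Phi$ correspond under $\e$ to $\Fq(t)$-linear combinations of the $\omega_i$ modulo $\Lambda_\infty$.

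First I would fix a logarithm $v_0\in\C_\infty$ of $\beta$, i.e., $\e(v_0)=\beta$; such $v_0$ exists because $\e$ is surjective. Since $\gamma\in\Phi[Q]$, I may write $\gamma=\e(u)$ with
$$u=\sum_{i=1}^r\frac{P_i(t)}{Q(t)}\,\omega_i,$$
where each $P_i\in\Fq[t]$ has degree $<d$. The power series $\e(z)=z+O(|z|_\infty^{\,q})$ is a local isometry on some disk $|z|_\infty<\rho$ depending only on $\Phi$. Hence, unless $|\gamma-\beta|_\infty\ge\rho$ (a case in which the desired bound is trivial), choosing $\ell=\sum_{i=1}^r a_i\omega_i\in\Lambda_\infty$ so that $u-v_0-\ell$ is a representative of $u-v_0\pmod{\Lambda_\infty}$ of smallest absolute value yields
$$|\gamma-\beta|_\infty=|u-v_0-\ell|_\infty=\frac{1}{|Q(t)|_\infty}\left|\sum_{i=1}^r\bigl(P_i(t)-Q(t)a_i\bigr)\omega_i-Q(t)v_0\right|_\infty.$$
The successive-minima property of $\omega_1,\dots,\omega_r$ allows the reduction coefficients $a_i\in\Fq[t]$ to be taken of degree bounded by a constant depending only on $\Phi$ and $\beta$; in particular each $P_i-Qa_i$, as well as $Q$ itself, has degree $O(d)$.

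Then I would invoke Bosser's theorem~\cite{Bosser}, which bounds from below any nonzero $\Fq[t]$-linear combination of Drinfeld logarithms by an expression of the shape $C_0'+C_1'\,d\log d$, with constants depending only on the logarithms involved and with $d$ the largest degree among the coefficient polynomials. Applied to the form
$$L=\sum_{i=1}^r\bigl(P_i(t)-Q(t)a_i\bigr)\omega_i-Q(t)v_0,$$
which is a linear combination of $\omega_1,\dots,\omega_r,v_0$ and is nonzero precisely because $\beta\notin\Phi_{\tor}$, this yields $\log|L|_\infty\ge C_0'+C_1'd\log d$. Subtracting $\log|Q(t)|_\infty=d$ produces the stated inequality. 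The one genuinely delicate point is the uniform degree bound on the $a_i$: one must check that reducing $u-v_0$ modulo $\Lambda_\infty$ to minimise its absolute value does not inflate the degrees beyond a quantity controlled by $v_0$ alone, independently of $Q$. This is exactly where working with a basis of successive minima is indispensable.
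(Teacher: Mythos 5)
Your argument is correct in outline, but it takes a genuinely different route from the paper. The paper does not go back to Bosser's linear-forms theorem directly: it quotes the already-packaged consequence from the Siegel-type paper \cite{siegel} (Fact 3.1 there), namely $\log|\Phi_P(\beta)|_\infty \ge C_2 + C_3\deg(P)\log\deg(P)$ for all $P\in\Fq[t]$, and then deduces the present statement by a purely dynamical observation: since $\gamma\in\Phi[Q]$ one has $\Phi_Q(\beta-\gamma)=\Phi_Q(\beta)$, and for $|y|_\infty$ small enough $\Phi_Q$ acts on $y$ isometrically up to the factor $|Q|_\infty=q^{d}$, so a torsion point $\gamma$ approximating $\beta$ too well would force $|\Phi_Q(\beta)|_\infty$ below the quoted bound. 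This reduction costs only the harmless extra term $d=\log|Q|_\infty\le d\log d$, absorbed by replacing $C_3$ with $C_3-1$. Your approach instead unwinds everything through the uniformization: you choose a logarithm $v_0$ of $\beta$, write $\gamma$ as the exponential of $\sum_i (P_i/Q)\omega_i$, reduce modulo $\Lambda_\infty$, and apply Bosser to the explicit linear form $L=\sum_i(P_i-Qa_i)\omega_i-Qv_0$. This works --- the nonvanishing of $L$ is indeed equivalent to $\beta\notin\Phi_{\tor}$ (if $L=0$ then $Qv_0\in\Lambda_\infty$, hence $\Phi_Q(\beta)=0$), and the degree bound on the $a_i$ that you rightly flag as the delicate point does follow from the successive-minima property together with the ultrametric inequality $|\ell|_\infty\le|u-v_0|_\infty$, the latter being bounded independently of $Q$ because $\deg P_i<\deg Q$. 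What your route buys is self-containedness and an explicit view of where the $d\log d$ comes from (the coefficient degrees in $L$ are $O(d)$); what the paper's route buys is brevity and the avoidance of all the lattice-reduction bookkeeping, at the price of invoking the earlier reference. Essentially, your argument re-proves the cited Fact 3.1 of \cite{siegel} with the torsion term built in, rather than bootstrapping from it.
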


\begin{proof}
In \cite[Fact 3.1]{siegel}, Tucker and the author showed that Bosser's result yields the existence of some (negative) constants $C_2$ and $C_3$ such that for all polynomials $P\in\Fq[t]$ we have
\begin{equation}
\label{first bound for linear forms in logarithms}
\log|\Phi_P(\beta)|_\infty \ge C_2 +C_3\deg(P)\log\deg(P).
\end{equation}
On the other hand, if $|y|_\infty$ is sufficiently small but positive, then
$$\log |\Phi_t(y)|_\infty =\log |ty|_\infty= \log|y|_\infty +|t|_\infty.$$
Note that $|t|_\infty>0$ since $\infty$ is an infinite place. So assuming that $d$ is sufficiently large, say $d\ge d_0\ge 3$, if
$$\log |\beta-\gamma|_\infty < C_2+(C_3-1)d\log d$$
then
$$\log |\Phi_Q(\beta-\gamma)|_\infty =\log|\Phi_Q(\beta)|_\infty=\log|Q\beta|_\infty=d\log|t|_\infty +\log|\beta|_\infty< C_2+C_3 d\log d$$
contradicting thus \eqref{first bound for linear forms in logarithms}. Therefore for all $d\ge d_0$ we have that
$$\log |\beta-\gamma|_\infty \ge  C_2+(C_3-1)d\log d.$$
Since $\beta\notin\Phi_{\tor}$ we conclude that there exists $C_4<0$ such that for all torsion points $\gamma\in\Phi[Q]$ for some monic polynomial $Q$ of degree less than $d_0$ we  have
$$\log|\beta-\gamma|_\infty \ge C_4.$$
In conclusion, Fact~\ref{Bosser} holds with $C_0:=\min\{C_2,C_4\}$ and $C_1:=C_3-1$.
\end{proof}

\begin{prop}
\label{in S infinity}
Theorem~\ref{alternative way of computing the local height} holds if $v\in S_{\infty}$.
\end{prop}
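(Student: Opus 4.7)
The plan is to split into two cases depending on whether $\beta$ lies on the Julia set $J_v$ or not, and to exploit the weak equidistribution of torsion points (Theorem~\ref{T:equi}) in both. The delicate case $\beta\in J_v$ forces one to control conjugates of $\gamma_n$ lying extremely close to $\beta$, and I would do this by combining Bosser's lower bound (Fact~\ref{Bosser}) with the quantitative strong equidistribution of Corollary~\ref{corollary strong equidistribution}. Throughout, write $M_n:=[K(\gamma_n):K]$ and let $d_n$ be the degree of a monic annihilator of $\gamma_n$ in $\Fq[t]$. Since $\{\gamma_n\}$ is an infinite sequence of distinct torsion points, the Pink--R\"utsche openness theorem gives $M_n\gg q^{rd_n}$, and hence $d_n\to\infty$ with $d_n=O(\log M_n)$. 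Recall also that every torsion point of $\Phi$ lies in $J_v$.

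When $\beta\notin J_v$, the function $z\mapsto\log|\beta-z|_v$ is continuous and bounded on the compact set $J_v$, so Theorem~\ref{T:equi} directly yields
$$\lim_{n\to\infty}\frac{1}{M_n}\sum_{\sigma\in\Gal(\Kbar/K)}\log|\beta-\gamma_n^\sigma|_v=F(\beta):=\int_{J_v}\log|\beta-z|_v\,d\mu_v(z).$$
To identify $F(\beta)$ with $\hhat_{\Phi,v}(\beta)$ I would use that the Drinfeld exponential is $\Fq$-linear, so that $\E\tau^{-1}$ of \eqref{E:continuous isomorphism} realizes $J_v$ as a compact topological group on which $\mu_v$ is translation-invariant Haar measure. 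The factorization $\Phi_t(\beta)-\Phi_t(w)=\prod_{\zeta\in\Phi[t]}(\beta-w-\zeta)$, combined with translation invariance of $\mu_v$ on $J_v$ (using $\Phi[t]\subset J_v$) and with the identity $(\Phi_t)_{*}\mu_v=\mu_v$ (routine since $\Phi_t$ corresponds to the shift on $G$), gives the functional equation $F(\Phi_t(\beta))=q^r F(\beta)$. This matches the functional equation of $\hhat_{\Phi,v}$, and since $F(\beta)=\log|\beta|_v=\hhat_{\Phi,v}(\beta)$ whenever $|\beta|_v$ exceeds the diameter of $J_v$ (by the ultrametric inequality), iterating $\Phi_t$ propagates the equality to every $\beta\notin\cJ_v$; on $\cJ_v\setminus J_v$ both sides vanish by compactness-boundedness of $F$ together with the same iteration.

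When $\beta\in J_v$ one has $\hhat_{\Phi,v}(\beta)=0$, and translation invariance of $\mu_v$ plus the functional equation at $\beta=0$ force $F(\beta)=F(0)=0$; the target limit is therefore $0$. Fix $\epsilon\in(0,r_v)$ with $r_v$ as in Corollary~\ref{corollary strong equidistribution} and decompose
$$S_n:=\frac{1}{M_n}\sum_{\sigma}\log|\beta-\gamma_n^\sigma|_v=S_n^{\mathrm{out}}(\epsilon)+S_n^{\mathrm{in}}(\epsilon)$$
according to whether $|\beta-\gamma_n^\sigma|_v\ge\epsilon$ or $<\epsilon$. For the outer part, approximating the cutoff indicator by continuous functions and applying Theorem~\ref{T:equi} gives $S_n^{\mathrm{out}}(\epsilon)\to\int_{J_v\setminus B(\beta,\epsilon)}\log|\beta-z|_v\,d\mu_v$, which tends to $F(\beta)=0$ as $\epsilon\to 0$. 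For the inner part, I would decompose dyadically along the annuli $\{z:|\beta-z|_v=q^{-k}\}$ for $k\ge k_0:=\lceil\log_q(1/\epsilon)\rceil$; the local isometry from Corollary~\ref{corollary strong equidistribution} assigns each such annulus $\mu_v$-measure $\Theta(q^{-rk})$ and counts the matching conjugates as $M_n\cdot\Theta(q^{-rk})+O_\delta(M_n^{1-\delta})$, while Fact~\ref{Bosser} restricts $k$ to the range $k\le K_n:=O(d_n\log d_n)$. The main-term contribution is an absolutely convergent series that tends to $0$ as $\epsilon\to 0$, and the error-term contribution is $O_\delta(M_n^{-\delta}K_n^2)=o(1)$ because $d_n=O(\log M_n)$.

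The main obstacle is the quantitative balancing in the second case: Bosser's bound is essential to keep the innermost summands no more negative than $-O(d_n\log d_n)$, and the strong equidistribution must simultaneously count conjugates in every annulus down to polynomial error in $M_n$; the compatibility of these two inputs is precisely what the Pink--R\"utsche estimate $d_n=O(\log M_n)$ guarantees. The potential-theoretic identification of $F$ with $\hhat_{\Phi,v}$ in the first case is technical but follows a standard template once the group-theoretic structure of $J_v$ and the translation invariance of $\mu_v$ are in hand.
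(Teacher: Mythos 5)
Your proof follows the paper's strategy in its essential structure --- the split according to $\beta\in J_v$ or not, weak equidistribution (Theorem~\ref{T:equi}) for the easy case and for the part of the sum away from $\beta$, and the combination of Bosser's lower bound with the quantitative Corollary~\ref{corollary strong equidistribution} over a shell decomposition near $\beta$, calibrated by the Pink--R\"utsche estimate $[K(\gamma_n):K]\gg q^{rd_n}$. Your shell bookkeeping ($q$-adic annuli indexed by $k$, error $O_\delta(M_n^{-\delta}K_n^2)$) is a legitimate variant of the paper's decomposition of $J_{v,\beta,\epsilon}$ into $D_n\approx M_n^{1/2r}$ pieces, and your derivation of $\int_{J_v}\log|z|_v\,d\mu_v=0$ from translation invariance and the functional equation $F(\Phi_t(\beta))=q^rF(\beta)$ is a more self-contained substitute for the paper's appeal to Ingram's local heights and the Baker--Rumely Arakelov--Green function. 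Where you genuinely diverge is the identification $F(\beta)=\hhat_{\Phi,v}(\beta)$ for $\beta\notin J_v$: the paper gets this by equidistributing the full torsion packets $\Phi[Q]$ and quoting the limit formula $\lim_{\deg Q\to\infty}\log|\Phi_Q(\beta)|_v/q^{r\deg Q}=\hhat_{\Phi,v}(\beta)$ from \cite{newdrin}, whereas you iterate the functional equation. That works cleanly outside $\cJ_v$, but your one-line treatment of $\beta\in\cJ_v\setminus J_v$ (a genuinely nonempty set at infinite places) is under-justified: the iteration $F(\beta)=q^{-rn}F(\Phi_{t^n}(\beta))$ gives $F(\beta)\le 0$ from an upper bound on $F$ over the bounded set $\cJ_v$, but the matching lower bound requires control of $\log\,\mathrm{dist}(\Phi_{t^n}(\beta),J_v)$ along the forward orbit, which is not automatic from ``compactness-boundedness.'' This is a fillable wrinkle (and is exactly what the cited \cite[Corollary 3.13]{newdrin} takes care of), but as written it is the one step of your argument that does not close.
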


\begin{proof}
Again we'll split our analysis into two cases depending on whether $\beta$ is in the Julia set $J_v$ or not.

{\bf Case 1.} Assume $\beta\notin J_v$.

As previously discussed, if $\beta\notin J_v$, then $f(z)=\log|z-\beta|_v$ is a continuous function on $J_v$ and therefore  using the result of \cite[Theorem 2.7]{equi} (see our Theorem~\ref{T:equi} above, or alternatively use \cite[Proposition 4.5]{Baker-Hsia})
$$\lim_{n\to\infty} \frac{1}{[K(\gamma_n):K]}\cdot \sum_{\sigma\in\Gal(\Kbar/K)} \log|\beta-\gamma_n^\sigma|_v = \int_{J_v} \log|\beta-z| d\mu_v(z).$$

Because $\deg(Q_n)\to \infty$ (since the torsion points $\gamma_n$ are distinct), we know that $\{\Phi[Q_n]\}_n$ is equidistributed in $J_v$ and thus
$$\int_{J_v} \log|\beta-z| d\mu_v(z)=\lim_{\deg(Q)\to\infty} \frac{1}{q^{r\deg(Q)}}\sum_{\Phi_Q(z)=0}\log|\beta-z|_v =\lim_{\deg(Q)\to\infty}\frac{\log|\Phi_Q(\beta)|_v}{q^{r\deg(Q)}}.$$
By \cite[Corollary 3.13]{newdrin} we conclude that
$$\lim_{\deg(Q)\to\infty}\frac{\log|\Phi_Q(z)|_v}{q^{r\deg(Q)}}=\hhat_{\Phi,v}(z)$$
which yields that indeed
$$\lim_{n\to\infty} \frac{1}{[K(\gamma_n):K]}\cdot \sum_{\sigma\in\Gal(\Kbar/K)} \log|\beta-\gamma_n^\sigma|_v =\hhat_{\Phi,v}(\beta).$$

{\bf Case 2.} Assume $\beta\in J_v$.

First we note that in this case $\hhat_{\Phi,v}(\beta)=0$. We need to show that
$$\lim_{n\to\infty} \frac{1}{[K(\gamma_n):K]}\cdot \sum_{\sigma\in\Gal(\Kbar/K)} \log|\beta-\gamma_n^\sigma|_v = 0 = \hhat_{\Phi,v}(\beta).$$

For each $n\in\N$,  let $Q_n\in\Fq[t]$ be the monic polynomial of minimal degree $d_n$ such that $\Phi_{Q_n}(\gamma_n)=0$.  Then we know that $e_n=[K(\gamma_n):K]>> q^{rd_n}$ since $\frac{\# \GL_r(\Fq[t]/(Q_n))}{[K(\gamma_n):K]}$ is bounded above by \cite{Pink-Rutsche}.

We claim that $\int_{J_v} \log|z|_v d\mu_v=0$. Indeed, for each point $z$ we have
$$\log|z|_v=\hhat_{\Phi,v}(z) - \lambda_v(z) + c_v(\Phi),$$
where $\lambda_v$ is the local height as defined by Ingram \cite{Ingram} and $$c_v(\Phi):=\frac{-\log|a_r|_v}{q^r-1}, $$
where $a_r$ is the leading coefficient of $\Phi_t(x)$ (for more details see \cite{Ingram}). Since we assumed $a_r=1$, then for each $z\in J_v$ we have
$$\log|z|_v= -\lambda_v(z)$$
because $\hhat_{\Phi,v}(z)=0$. However $\lambda_v(z)=g_{\mu_v}(z,0)$ where $g_{\mu_v}(x,y)$ is the Arakelov-Green function as defined in \cite[Section 10.2]{Baker-Rumely}. Therefore, by \cite[Proposition 10.12]{Baker-Rumely} we have
$$\int_{J_v}\lambda_v(z)d\mu_v=0$$
since the invariant measure $\mu_v$ is supported on the Julia set $J_v$. Thus indeed
\begin{equation}
\label{integral is zero}
\int_{J_v} \log|z|_v d\mu_v =0.
\end{equation}

Next we employ the strategy of proof from \cite{Ih}. 
Let $\epsilon:=|t^m|_v^{-1}< 1$ be sufficiently small such that the analytic map $\e: \C_\infty\lra \C_\infty$ given by
$$\e(u):=u\cdot \prod_{\omega\in\Lambda\setminus\{0\}} \left(1-\frac{u}{\omega}\right)$$
is an isomorphism restricted on the closed ball $\bar{D}(0,\epsilon)$. 

We consider 
$$J_{v,\beta,\epsilon}:=\{z\in J_v\text{ : }|z-\beta|_v\le \epsilon\}.$$
Also, we define $h_{\beta,\epsilon}:J_v\lra \R$ as follows
$$h_{\beta,\epsilon}(z):=\min\left\{0, \log\left(\frac{|z-\beta|_v}{\epsilon}\right) \right\}.$$
Then $h_{\beta,\epsilon}$ is supported on $J_{v,\beta,\epsilon}$ and it has a logarithmic singularity  at $\beta$. So, there exists a continuous function $g_{\beta,\epsilon}:J_v\lra \R$ such that $\log|z-\beta|_v = g_{\beta,\epsilon}(z)+h_{\beta,\epsilon}(z)$. 

For each $n\in\N$ we define $\mu_n:=\dirac_{\gamma_n}$ be the probability measure on $J_v$ supported on the Galois orbit of $\gamma_n$ (see Definition~\ref{D:Dirac measure}). Then for each continuous function $f:J_v\lra \R$ we have
$$\int_{J_v} f d\mu_n :=\frac{1}{[K(\gamma_n):K]}\cdot  \sum_{\sigma \in\Gal(\Kbar/K)} f\left(\gamma_n^\sigma\right).$$

\begin{lemma}
\label{integral of h}
$$\int_{J_v} h_{\beta,\epsilon} d\mu_v = -\frac{\epsilon^r}{q^r-1}.$$
\end{lemma}

\begin{proof}
Since we know that $h_{\beta,\epsilon}$ is supported on $J_{v,\beta,\epsilon}$ and moreover we know that $J_v$ is closed under translations (and $\mu_v$ is translation-invariant) it suffices to prove 
$$\int_{J_{v,0,\epsilon}} h_{0,\epsilon} d\mu_v = -\frac{\epsilon^r}{q^r-1},$$
i.e., we may assume $\beta=0$. So, we have to prove that
$$\int_{J_{v,0,\epsilon}} \log\left(\frac{|z|_v}{\epsilon}\right) d\mu_v = -\frac{\epsilon^r}{q^r-1}.$$
By our assumption, $\e$ induces an isometric analytic automorphism of $\bar{D}(0,\epsilon)$; so using the change of variables $z=\e(u)$ we compute
\begin{eqnarray*}
\int_{J_{v,0,\epsilon}} \log\left(\frac{|z|_v}{\epsilon}\right) d\mu_v\\
= \int_{\e^{-1}(J_{v,0,\epsilon})} \log \left(\frac{|\e(u)|_v}{\epsilon}\right) d\nu\\
= \int_{\e^{-1}(J_{v,0,\epsilon})} \log \left(\frac{|u|_v}{\epsilon}\right) d\nu,
\end{eqnarray*}
where $\nu$ is the measure on $\e^{-1}(J_{v,0,\epsilon})$ which is isomorphic to $$\left(\Fq((1/t))\otimes_{\Fq[t]}\Lambda\right)\cap B(0,\epsilon) = \oplus_{i=1}^r B\left(0,\epsilon |\omega_i|_v^{-1}\right)\cdot \omega_i.$$  
Furthermore, we recall that for $x_1,\dots,x_r\in \Fq[[1/t]]$ we have that 
$$\log|x_1\omega_1+\cdots +x_r\omega_r|_v = \max_{i=1}^r |x_i\omega_i|.$$
Making another change of variables $x_i=\frac{u_i}{\omega_i\cdot t^m}$ and noting that $\epsilon =|1/t^m|_v$ and that the measure $\nu$ has mass equal to $1$, we are left to show that
$$I:=\int_{\left(\Fq[[1/t]]\right)^r} \log\max_{i=1}^r |x_i|_v d\nu(x_1)\cdots d\nu(x_r) = -\frac{1}{q^r-1}.$$
Indeed, we let
$$S_1:=\left(\Fq[[1/t]]\right)^r\setminus \left(1/t\cdot \Fq[[1/t]]\right)^r.$$ 
We note that $\max_{i=1}^r\log|x_i|_v$ restricted on $S_1$ equals $0$, and so we obtain
\begin{eqnarray*}
I\\
& = \int_{\left(1/t\Fq[[1/t]]\right)^r} \max_{i=1}^r \log|x_i|_v d\nu(x_1)\cdots d\nu(x_r) \\
& = \frac{1}{q^r}\cdot \int_{\left(\Fq[[1/t]]\right)^r} -1+\max_{i=1}^r \log|y_i|_v d\nu(y_1)\cdots d\nu(y_r)\text{ (by the change of variables $x_i=y_i/t$)}\\
& = - \frac{1}{q^r} + \frac{I}{q^r}
\end{eqnarray*} 
So, $I=-\frac{1}{q^r-1}$. 
\end{proof}

So, using \eqref{integral is zero} coupled with Lemma~\ref{integral of h} we obtain that
$$\int_{J_v} g_{\beta,\epsilon}(z) d\mu_v = - \int_{J_v} h_{\beta,\epsilon}(z) d \mu_v =  \frac{\epsilon^r}{q^r-1}  .$$
Using the fact that the measures $ \mu_n$ converge (weakly) to $\mu_v$ (according to \cite{equi}; see also Theorem~\ref{T:equi}) we conclude that for $n$ sufficiently large we have
\begin{equation}
\label{bounding the g}
\left|\int_{J_v} g_{\beta,\epsilon}(z) d\mu_n \right| < 2 \epsilon^r .
\end{equation}
It remains to bound $\left|\int_{J_v} h_{\beta,\epsilon}(z) d\mu_n\right|$. 

Since $h_{\beta,\epsilon}$ is supported on $J_{v,\beta,\epsilon}$ it suffices to analyze the conjugates of $\gamma_n$ which land in $J_{v,\beta,\epsilon}$. For this we let $D_n$ be the smallest integer larger than $[K(\gamma_n):K]^{1/2r}$ and we split $J_{v,\beta,\epsilon}$ into $D_n$ subsets as follows. For each interval $[c, d]\subset [0,1]$ let $J_v([c, d])$ be the subset of $J_{v,\beta,\epsilon}$ containing  all $z\in J_v$ such that $c\epsilon\le |z-\beta|_v\le d\epsilon$. Then 
$$J_{v,\beta,\epsilon}=\bigcup_{i=1}^{D_n} J_v\left(\left[\frac{i-1}{D_n},\frac{i}{D_n}\right]\right).$$
We note that $\mu_v(J_v([c,d]))=(d\epsilon)^r-(c\epsilon)^r$ for each $0\le c<d\le 1$ since $J_{v,\beta,\epsilon}$ is isomorphic to $\left(1/t^m \cdot \Fq[[1/t]]\right)^r$ and $|1/t^m|_v=\epsilon$. So, for large $n$, Corollary~\ref{corollary strong equidistribution} (applied with $\delta :=2/3$, say) yields each such
subset contains at most 
$$2\mu_v(J_v([c,d]))\cdot [K(\gamma_n):K]\le 2\epsilon^r(d^r-c^r)D_n^{2r}.$$ 
conjugates of $\gamma_n$.  The reason for which $\delta =2/3$ works is that in this case
$$\mu_v(J_v([c,d]))\cdot [K(\gamma_n):K]\gg_\epsilon D_n^r\gg D_n^{2r(1-\delta)}\gg [K(\gamma_n):K]^{1-\delta},$$
and thus Corollary~\ref{corollary strong equidistribution} yields that $2\mu_v(J_v([c,d]))\cdot [K(\gamma_n):K]$ is the main term for computing the number of conjugates of $\gamma_n$ landing in $J_v([c,d])$ (note that $\epsilon$ is fixed for this computation).

We analyze the first interval: $J_v([0, 1/D_n])$.  We recall that $d_n$ is the degree of the minimal monic polynomial $Q_n$ such that $\Phi_{Q_n(t)}(\gamma_n)=0$.  Without loss of generality we assume $\gamma_n\in J_v([0, 1/D_n])$ is the conjugate of $\gamma_n$ closest to $\beta$.  By Bosser's theorem (see \cite{Bosser} and also our Fact~\ref{Bosser}), we have
$$\log|\gamma_n-\beta|_v \ge C_0 +C_1d_n\log d_n.$$
On the other hand,  there are at most 
$$2\mu_v(J_v([0,1/D_n]))[K(\gamma_n):K]=\frac{2\epsilon^r[K(\gamma_n):K]}{D_n^r}\le  2\epsilon^r [K(\gamma_n):K]^{1/2}$$ 
conjugates of $\gamma_n$ in $J_v([0, 1/D_n])$. We denote by $I_0$ the set of all $\sigma\in\Gal(\Kbar/K)$ such that $\gamma_n^\sigma\in J_v([0,1/D_n])$. Using also that $\epsilon < 1$, we conclude that
\begin{equation}
\label{estimate on the first arc}
0\ge \int_{J_v([0,1/D_n])} h_{\beta,\epsilon}(z) d\mu_n = \frac{\sum_{\sigma\in I_0}\log\frac{|\beta-\gamma_n^\sigma|_v}{\epsilon}}{[K(\gamma_n):K]}  \ge \frac{4\epsilon^r \cdot (C_0+C_1d_n\log d_n)}{[K(\gamma_n):K]^{1/2}} \ge -\epsilon^r ,
\end{equation}
for large $n$ since $D_n=[K(\gamma_n):K]^{1/2}\gg q^{d_n/2}$ (by \cite{Pink-Rutsche}). 

Finally, consider the remaining subsets of $J_{v,\beta,\epsilon}$. For each $i=2,\dots, D_n$ there are at most $2\mu_v(J_v([(i-1)/D_n,i/D_n]))\cdot [K(\gamma_n):K]$ conjugates of $\gamma_n$ in $J_v([(i-1)/D_n, i/D_n])$ and for each such conjugate $\gamma_n^\sigma$ we have
$$h_{\beta,\epsilon}(\gamma_n^\sigma) =\log\frac{|\beta -\gamma_n^\sigma|_v}{\epsilon} \ge  \log((i-1)/D_n).$$
Using that  $J_{v,\beta,\epsilon}$ is isomorphic to $\left(1/t^m\cdot \Fq[[1/t]]\right)^r$ we conclude
$$
0\ge \int_{J_v\setminus J_v([0,/D_n])} h_{\beta,\epsilon}(z) d\mu_n$$
$$ \ge \sum_{i=1}^{D_n-1}  \log\left(\frac{i}{D_n}\right)\cdot 2\mu_v(J_v[(i-1)/D_n, i/D_n])\\
 > 2 \int_{J_{v,\beta,\epsilon}} \log \left(\frac{|z|_v}{\epsilon}\right)\\
 = -\frac{2\epsilon^r}{q^r-1},
$$
by Lemma~\ref{integral of h}.  
Hence, using also inequality \eqref{estimate on the first arc} we obtain that
\begin{equation}
\label{bounding the h}
\left| \int_{J_v} h_{\beta,\epsilon}(z)d\mu_n \right| < 3\epsilon^r.
\end{equation}
Combining inequalities \eqref{bounding the g} with \eqref{bounding the h} we conclude that
$$\left|\int_{J_v} \log|z-\beta|_v d\mu_n\right| < 5\epsilon^r,$$
for all $n$ sufficiently large, 
and so,
$$\lim_{n\to\infty} \frac{1}{[K(\gamma_n):K]} \cdot \sum_{\sigma\in\Gal(\Kbar/K)} \log|\gamma_n^\sigma-\beta| = 0 = \hhat_{\Phi,v}(\beta),$$
if $\beta\in J_v$. This concludes our proof.
\end{proof}

Propositions~\ref{notin S infinity} and \ref{in S infinity} finish the proof of Theorem~\ref{alternative way of computing the local height}.

\begin{remark}
\label{final remarks}
In the proof of Proposition~\ref{in S infinity} we use in an essential way the hypothesis that $\Phi$ has no complex multiplication. However, we expect that Theorem~\ref{Ih's conjecture result} holds with this hypothesis on $\Phi$, but the proof would be harder.
\end{remark}

\end{document}